\documentclass[11pt]{article}

\usepackage[a4paper,margin=1in]{geometry}
\usepackage{amsmath,amssymb,amsthm,mathtools}
\usepackage{enumitem}
\usepackage{microtype}
\usepackage[hidelinks]{hyperref}

\newcommand{\C}{\mathbb C}
\newcommand{\R}{\mathcal R}
\newcommand{\K}{\mathcal K}
\newcommand{\im}{\operatorname{Im}}
\newcommand{\ord}{\operatorname{ord}}
\newcommand{\Mat}{\operatorname{M}}

\theoremstyle{definition}
\newtheorem{definition}{Definition}[section]

\newtheorem{lemma}[definition]{Lemma}
\theoremstyle{plain}
\newtheorem{theorem}[definition]{Theorem}
 
\theoremstyle{remark}

\title{ On computing  deformations of contraction algebras of type $E$}
\author{ Agata Smoktunowicz,  Alicja Smoktunowicz}
\date{}

\begin{document}
\maketitle

\begin{abstract}
We construct deformations of contraction algebras of type $E$. We consider the
class given by the potential
\[
  x^3+xy^3+p(y),
\]
where $0\neq p(y)\in\C[y]$ and $\ord(p)=4$. We show that the contraction
algebras of this type deform to
\[
  \C^{\oplus 6}\oplus \Mat_2(\C)^{\oplus 3}\oplus \Mat_3(\C).
\]
We use a slight generalisation of the second method described in Section~11 of
\cite{SmoktunowiczDeformations}. In particular, the construction does not require
the generator images to lie in $M[t,t^{-1}]$, and the auxiliary polynomials need
not be square-free.
 We then consider the class of contraction algebras given by the potential 
 \[
  x^3+xy^3+p(y),
\]
where $0\neq p(y)\in\C[y]$ and $\ord(p)=n>4$. We show that the contraction
algebras of this type deform to
\[
  \C^{\oplus 7}\oplus \Mat_2(\C)^{\oplus n-1}\oplus \Mat_3(\C).
\]

\end{abstract}
\section{Introduction}
 This paper is motivated by questions asked by Michael Wemyss about deformations of  contraction algebras. These questions ask whether every contraction algebra can be flatly deformed to a unique semisimple algebra suggested by geometry.  
One aim is to give an intrinsic way of introducing algebraic-geometric concepts using only noncommutative algebra.   A concrete aim would be to read Gopakumar-Vafa invariants directly  from a given contraction algebra using  algebraic methods.    A related result in the literature concerns a generic geometric deformation of the
contraction algebras (Theorem $1.2$, \cite{HuaToda}).  In this situation, the corresponding contraction algebra becomes semisimple, and its 
matrix-block multiplicities recover the genus-zero Gopakumar-Vafa invariants. However, in general it is not known if geometric and algebraic deformations coincide.  Toda constructed geometric deformations for all contraction algebras, but it is not clear whether these families are flat \cite{Toda}.  

A related question about uniqueness of flat deformations of contraction algebras  to semisimple algebras was also asked by Ben Davison. 

These questions were answered in the affirmative for contraction algebras of types $A$ and $D$ \cite{JelisiejewSmoktunowicz}, however the case of contraction algebras of type $E$ remained open. There are four types of contraction algebras of type $E$ (see \cite{BW}), and the aim of our paper is to show that the first class of contraction algebras of type $E$ admits flat deformations to algebras suggested by geometry. 
  
 Contraction algebras are finite-dimensional noncommutative algebras naturally associated
with three-dimensional flopping contractions. They were introduced by Donovan and Wemyss
in \cite{DW} as universal noncommutative deformation algebras of the contracted curves, and they encode
significantly more information than their commutative quotients. In particular, contraction
algebras play an important role in the homological and birational geometry of threefold flops;
they control the corresponding noncommutative twist functors and are closely related to the
classification of flopping contractions \cite{Ben, IW, MV}. Recent work on noncommutative normal forms also
shows that contraction algebras provide a natural algebraic framework for the ADE
classification of threefold flops, including the more complicated type \(E\) cases \cite{BW}.

An additional motivation comes from enumerative geometry. For a smooth threefold flop
contracting a single curve, Toda related the dimension of the contraction algebra to the
genus-zero Gopakumar--Vafa invariants \(n_j\) by the formula
\[
    \dim_{\mathbb C} A_{\mathrm{con}}
      = \sum_{j\geq 1} j^2 n_j,
\]
and showed that the dimension of its abelianisation is \(n_1\) (Theorem $1.1$, \cite{Toda}). Thus a semisimple algebra of
the form
\[
    \bigoplus_{j\geq 1} M_j(\mathbb C)^{\oplus n_j}
\]
has precisely the dimension predicted by Toda's formula. This suggests an algebraic
problem: instead of recovering only the numerical invariants from the dimension of the
contraction algebra, one may try to construct a flat deformation whose generic fibre is a
semisimple algebra in which the multiplicity of the block \(M_j(\mathbb C)\) is the
corresponding GV multiplicity.
This problem was suggested to us by Michael Wemyss. 

It is therefore useful to have purely algebraic constructions of flat
deformations, especially constructions that can be verified explicitly from generators and
relations. This was one of the motivations for the methods developed in earlier work on
deformations of noncommutative algebras.

In the present paper we study contraction algebras of type \(E\) arising from potentials
\[
    x^3+xy^3+p(y),
\]
 where $\ord (p)\geq 4$ is the smallest power of $y$ which appears as a nonzero summand in $p(y)$.  
 In contrast to the  cases of contraction algebras of  types $A$ and $D$, in the expected fibres, besides $1$-by-$1$  and $2$-by-$2$ matrix factors, $3$-by-$3$ matrix rings  occur.

Our first main result concerns the case
\[
    \ord (p) = 4.
\]
We construct explicitly a flat deformation of the corresponding contraction algebra to
\[
    \mathbb C^{\oplus 6}
      \oplus M_2(\mathbb C)^{\oplus 3}
      \oplus M_3(\mathbb C).
\]
The dimensions of these blocks give
\[
    6+3\cdot 2^2+3^2=27,
\]
which agrees with the dimension of the contraction algebra. More importantly, the decomposition
retains the individual matrix sizes and multiplicities rather than only their weighted sum.

We then treat the more general family in which
\[
    \operatorname{ord} (p) = n.
\]
For the classes considered here we construct flat deformations to
\[
    \mathbb C^{\oplus 7}
      \oplus M_2(\mathbb C)^{\oplus (n-1)}
      \oplus M_3(\mathbb C).
\]
The construction is explicit. We build suitable images \(X,Y\) of the generators in a product
of matrix algebras over a Laurent-series field and verify directly the deformed Jacobi
relations. The separate \(1\times 1\), \(2\times 2\), and \(3\times 3\) components are then
combined by a subdirect-product argument.

The deformation method is a slight generalisation of the second method developed previously in \cite{SmoktunowiczDeformations} (where the construction from \cite{DoraDave} was modified).
We introduce two extensions. First, the images of the generators are allowed to have entries
in the Laurent-series algebra \(M\{t\}[t^{-1}]\), rather than being Laurent polynomials in
\(t\). Second, the auxiliary polynomials are not required to be square-free. These modifications
are essential for the type \(E\) constructions considered below and may also be useful for other
contraction algebras and quiver algebras arising in threefold geometry. 
 Many similar open questions appear in the area in connection with threefolds. Therefore, the methods developed here might be useful in investigating deformations of quiver algebras or other classes of type $E$ quivers. 
 See, for example, \cite{W} for applications of deformations of quiver algebras in geometry, and calculations of GV invariants for some  types of quivers which appear in algebraic geometry. 

The paper is organised as follows. In Section~2 we formulate the deformation method and remove
the square-free hypothesis. Section~3 recalls the contraction algebras associated with the
potentials above. Section~4 treats the case \(\ord (p)=4\). The remaining sections
treat the general cases according to the congruence class of \(\ord (p)-1\) modulo
\(3\).

\section{The method}\label{M1} 

Put
\[
  \R=\C[[t]],\qquad \K=\R[t^{-1}].
\]
Thus $\K$ is the field of Laurent series associated with the power-series ring
$\R$. By ${\mathbb C}[[t]]$ we denote the power-series ring over ${\mathbb C}$.

Let $k'\geq 1$ and let $n_1,\dots,n_{k'}$ be positive integers. For each
$i\leq k'$, fix a monic polynomial
\[
  g_i(z)\in \R[z],\qquad \deg g_i=d_i>0.
\]
Let
\[
  B_i=\K[z]/(g_i(z)),\qquad u_i=z+(g_i(z))\in B_i.
\]
Then $g_i(u_i)=0$ in $B_i$.

\begin{definition}\label{111}
Define
\[
  A'=\bigoplus_{i=1}^{k'}\Mat_{n_i}(B_i)
\]
and
\[
  M\{t\}
  =\bigoplus_{i=1}^{k'}
  \Mat_{n_i}\bigl(\R+\R u_i+\cdots+\R u_i^{d_i-1}\bigr).
\]
Since every $g_i$ is monic, $M\{t\}$ is an $\R$-algebra. Moreover,
\[
  A'=M\{t\}[t^{-1}].
\]
Set
\[
  N=\sum_{i=1}^{k'}n_i^2d_i
\]
and
\[
  A=\bigoplus_{i=1}^{k'}\Mat_{n_i}(\C)^{\oplus d_i}.
\]
Then $\dim_{\C}A=N$.
\end{definition}
\begin{definition}\label{213}
Let notation be as in Definition \ref{111}.  Denote  \[A_{i}'=\Mat_{n_i}(B_{i}).\]
 Let 
\[
  f:\R\langle x_1,\dots , x_\rho\rangle \longrightarrow A'.
\]
 We define homomorphisms 
$f_{1}, \ldots , f_{k'}$ to be the projections of $f$ onto $A_{i}'$,
\[f_{i}=\pi _{i}\circ f .\]
 Therefore, $f=f_{1}\oplus \cdots \oplus f_{k'}$. 
\end{definition}
\subsection{Hypotheses}

Let
\[
  f:\R\langle x_1,\dots,x_\rho\rangle \longrightarrow A'
\]
be a homomorphism of unital $\R$-algebras. We impose the following conditions.

\begin{enumerate}[label=\textup{(\arabic*)}]
\item $f(1)=1_{A'}$.
\item There exist $m_1,\dots,m_N\in\C\langle x_1,\dots,x_\rho\rangle$
      such that $f(m_1),\dots,f(m_N)$ are linearly independent over $\R$.
\item There exists $\delta\geq 0$ such that
      \[
        \im(f)\subseteq t^{-\delta}M\{t\}.
      \]
\end{enumerate}

 We will use the same definition of a flat deformation as in \cite{JelisiejewSmoktunowicz} and \cite{SmoktunowiczDeformations}. 
 This is the standard definition of a flat deformation used in algebraic geometry. 

The next statement is the version of the method that we shall use.

\begin{theorem}\label{thm:method}
Assume that $f$ satisfies the hypotheses above and that each $g_i$ is square-free
over an algebraic closure of $\K$. Then there exist
$q_1,\dots,q_N\in\R\langle x_1,\dots,x_\rho\rangle$ such that
\[
  \im(f)=\bigoplus_{i=1}^{N} \R f(q_i).
\]
In particular, $\im(f)$ is a free $\R$-module of rank $N$.

Writing
\[
  f(q_k)f(q_m)
  =\sum_{i=1}^{N}\bigl(\zeta_{i,k,m}+t\xi_{i,k,m}(t)\bigr)f(q_i),
\]
with $\zeta_{i,k,m}\in\C$ and $\xi_{i,k,m}(t)\in\R$, the constants
$\zeta_{i,k,m}$ define a finite-dimensional $\C$-algebra $N_0$ with basis
$d_1,\dots,d_{N}$ by
\[
  d_kd_m=\sum_{i=1}^{N}\zeta_{i,k,m}d_i.
\]
Moreover,
\[
  N_0\cong \im(f)/t\im(f),
\]
and $\im(f)$ is a flat deformation from $N_0$ to $A$.
\end{theorem}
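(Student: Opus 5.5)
The plan is to show first that $\im(f)$ is a free $\R$-module of rank exactly $N$; the remaining assertions then follow by reducing modulo $t$. Since $\R$ is a PID and $g_i$ is monic, $M\{t\}$ is a free $\R$-module of rank $\sum_i n_i^2 d_i = N$, with basis the matrix units times $u_i^j$, $0\le j<d_i$. By hypothesis (3), $t^{\delta}\im(f)$ is an $\R$-submodule of $M\{t\}$, and submodules of free modules over a PID are free. Hence $\im(f)$ is free of rank at most $N$. By hypothesis (2), it contains the $N$ elements $f(m_1),\dots,f(m_N)$, which are $\R$-linearly independent. So the rank is exactly $N$.

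Next I choose the basis from elements of the form $f(q)$. Any basis of $\im(f)$ consists of images of elements of $\R\langle x_1,\dots,x_\rho\rangle$, because every element of $\im(f)$ is such an image. This gives $q_1,\dots,q_N$ with $\im(f)=\bigoplus \R f(q_i)$. Since $\im(f)$ is a subalgebra, each product $f(q_k)f(q_m)$ expands in this basis with coefficients in $\R$. Splitting each coefficient into its constant term and $t$ times a power series gives $\zeta_{i,k,m}+t\xi_{i,k,m}$. Reducing modulo $t\im(f)$ shows that $\im(f)/t\im(f)$ is an associative unital $\C$-algebra of dimension $N$, with structure constants $\zeta_{i,k,m}$ in the basis given by the images of the $f(q_i)$. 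This algebra is $N_0$, so $N_0\cong \im(f)/t\im(f)$ and the associativity of $N_0$ is inherited. Flatness over $\R$ is automatic, since $\im(f)$ is free.

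The remaining and main step is to identify the generic fibre $\im(f)\otimes_{\R}\K$ with $A\otimes_{\C}\K$, or with its base change to an algebraic closure, as in the definition of flat deformation in \cite{JelisiejewSmoktunowicz}. Because $\im(f)$ has full rank inside $t^{-\delta}M\{t\}$, we get $\im(f)\otimes\K = M\{t\}[t^{-1}] = A'$. The rank equality gives the inclusion $\im(f)\otimes\K\subseteq A'$ with equal $\K$-dimensions, so the inclusion is an equality. Now $A'=\bigoplus_i \Mat_{n_i}(\K[z]/(g_i))$. Since $g_i$ is square-free over $\overline{\K}$, the algebra $\K[z]/(g_i)\otimes\overline{\K}$ is isomorphic to $\overline{\K}^{\,d_i}$ by the Chinese remainder theorem. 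Therefore
\[
A'\otimes_{\K}\overline{\K}\cong\bigoplus_i \Mat_{n_i}(\overline{\K})^{\oplus d_i}=A\otimes_{\C}\overline{\K}.
\]
This is exactly where square-freeness is used, and I expect the care needed here to lie in matching the precise form of the flat-deformation definition, which may ask for the generic fibre over $\K$ or over a finite extension. If a finite extension is required, I would adjoin the roots of the $g_i$ via some $t^{1/e}$ (Puiseux) and check that the identification descends appropriately.
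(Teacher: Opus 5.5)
Your proof is correct and follows the outline the paper indicates: hypothesis (3) gives finite generation, hypothesis (2) gives rank exactly $N$, reducing structure constants modulo $t$ gives $N_0\cong\im(f)/t\im(f)$, and square-freeness enters only in the generic fibre, as stated at the start of the proof of Theorem~\ref{thm:nonsquarefree}. Where you differ is in how the basis is produced.

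The paper gives no self-contained argument. It transplants the proof of Theorem~8 and Proposition~9 of the earlier paper, replaces $M[t,t^{-1}]$ by $M\{t\}[t^{-1}]$, and deletes the passage tied to making the $q_i$ polynomial in $t$. You instead use that $\R=\C[[t]]$ is a discrete valuation ring. Then $t^{\delta}\im(f)\subseteq M\{t\}\cong\R^{N}$ is free of rank at most $N$, and any $\R$-basis of $\im(f)$ lifts trivially to elements $q_i$ of the domain $\R\langle x_1,\dots,x_\rho\rangle$. This is legitimate exactly because the statement now asks only for $q_i$ in that ring rather than polynomial in $t$. It gives a short proof in which each hypothesis is visibly used once.

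You can drop your hedge about the definition. The theorem assumes square-freeness only over an algebraic closure of $\K$, and it would fail if generic fibres were compared over $\K$ itself. For instance, take $g_1=z^2-t$, $n_1=1$, $f(x_1)=u_1$: the generic fibre is the field $\K(t^{1/2})$, not $\K^{2}$. So the definition must compare fibres after base change to $\overline{\K}$, or to a finite extension splitting the $g_i$. Your identification $\im(f)\otimes_{\R}\overline{\K}\cong A'\otimes_{\K}\overline{\K}\cong A\otimes_{\C}\overline{\K}$ is then exactly what is required, with no descent step.
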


\begin{proof}
The proof is the same as the proof of the corresponding result in
\cite{SmoktunowiczDeformations}, with the following modifications (note also that in the current notation we use $N$ instead of $n$ which was used in \cite{SmoktunowiczDeformations}). 
First, one does not need to assume that every generator image belongs to
$M[t,t^{-1}]$; the boundedness condition
$\im(f)\subseteq t^{-\delta}M\{t\}$ is sufficient. Second, wherever the proof in
\cite{SmoktunowiczDeformations} uses polynomial expressions in $t$, one may use
power series in $\R$ (the detailed changes are at the end of this proof). Thus the elements denoted there by $q_i$ may be taken in
$\R\langle x_1,\dots,x_\rho\rangle$ rather than in a polynomial extension (see the detailed list of changes at the end of this proof).

Finally, the ambient algebra is $M\{t\}[t^{-1}]$, not $M[t,t^{-1}]$.
The remainder of the proof is unchanged.

{\em The list of detailed changes.} We use the proof of Theorem $8$ of \cite{SmoktunowiczDeformations}, with only minimal changes. In  \cite{SmoktunowiczDeformations} $n$ was used instead of $N$, so after these changes we put $n:=N$. The  changes are the same as the changes described in Subsection 11.3 of \cite{SmoktunowiczDeformations},  and additionally we make the following changes: 

\begin{itemize}
\item The assumption that $f(x_{i})\in M[t, t^{-1}]$ is removed from Assumption $1$ of Subsection $11.2$ of \cite{SmoktunowiczDeformations}.
\item In Proposition $9$ (1), we write ``There exist elements $q_{1}, \ldots , q_{n}\in {\mathbb C}\langle x_{1}, \ldots , x_{\rho}\rangle \{t\}$'' instead of  ``There exist elements $q_{1}, \ldots , q_{n}\in  {\mathbb C}\langle x_{1}, \ldots , x_{\rho}\rangle [t]$'' (in the current notation $ {\mathbb C}\langle x_{1}, \ldots , x_{\rho}\rangle \{t\}=\R\langle x_{1}, \ldots , x_{\rho }\rangle$, moreover $ {\mathbb C}\langle x_{1}, \ldots , x_{\rho}\rangle [t]$ is the polynomial ring in the variable $t$  over the free unital ${\mathbb C}$-algebra in free generators $x_{1}, \ldots , x_{\rho }$). We also remove a related part of the proof of Proposition $9$ (1). Namely, we remove lines $3$-$21$ from page $10$ of \cite{SmoktunowiczDeformations}.  
\item At the end of the last line of page $20$ we write ``$i_{4}$, for some $i_{4}\in t\cdot \im (f)$'', instead of ``$i_{3}$''.  
\item In line $-2$ on page $20$  (the second line from the end of page $20$) we write ``$M\{t\}[t^{-1}]$'' instead of ``$M[t, t^{-1}]$''. So we write ``$M\{t\}[t^{-1}]$ is an algebra containing the generator images''.  
\end{itemize} 
\end{proof}

\subsection{Removing the square-free hypothesis}

\begin{theorem}\label{thm:nonsquarefree} 
Assume that $f$ satisfies the assumptions of Theorem~\ref{thm:method}  except that the polynomials $g_i$ are not required to be square-free. Let 
\[
  N_0\cong \im(f)/t\im(f).
\] Then there is a flat deformation from the $\mathbb C$-algebra $N_{0}$ to $A$. 
\end{theorem}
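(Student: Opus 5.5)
The plan is to reduce to the argument of Theorem~\ref{thm:method} and locate exactly where square-freeness was used there. That hypothesis is needed only at the end, to identify the generic fibre: one wants $\im(f)\otimes_{\R}\K \cong A'$ and $A'$ to be (a form of) the semisimple algebra $A$. When $g_i$ is square-free, $B_i$ is a product of $d_i$ fields, each a finite extension of $\K$, and after base change to an algebraic closure $A'$ becomes $\bigoplus_i \Mat_{n_i}(\overline{\K})^{\oplus d_i}$. Everything else carries over verbatim: hypotheses (1)--(3), the construction of $q_1,\dots,q_N\in\R\langle x_1,\dots,x_\rho\rangle$, the freeness of $\im(f)$ of rank $N$ (a submodule of the free finite-rank module $t^{-\delta}M\{t\}$ over the PID $\R$, containing the $N$ independent elements $f(m_j)$), and the identification $N_0\cong \im(f)/t\im(f)$. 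So without square-freeness we still obtain a flat $\R$-family $\im(f)$ with special fibre $N_0$, but its generic fibre may be non-reduced.

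The key step is to perturb the auxiliary polynomials rather than the family. Write $g_i(z)=g_i^{(0)}(z)+t h_i(z)$ with $g_i^{(0)}\in\C[z]$. Choose monic $\tilde g_i\in\C[z]$ of degree $d_i$ with $d_i$ distinct roots, and introduce a second parameter $s$. Consider $G_i(z)=g_i(z)+s\,(\tilde g_i(z)-g_i^{(0)}(z))$, or more simply work over the two-parameter base $\C[[t,s]]$, where the fibre at $s=0$ is our family and a generic $s$ is square-free. The module $M\{t\}$ is defined by monic polynomials, so it deforms flatly in $s$ as well. However, $\im(f)$ is defined through $f$, and $f$ depends on $g_i$ through the relations $g_i(u_i)=0$, so this perturbation does not obviously carry $f$ along.

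A cleaner route avoids moving $f$. Flat deformation in the sense of \cite{JelisiejewSmoktunowicz} only asks that $N_0$ lie in the closure of the orbit of $A$ in the variety of $N$-dimensional algebra structures, equivalently that $N_0$ degenerate from $A$. The steps are:
\begin{enumerate}
\item Show $N_0$ degenerates from $C:=\bigoplus_i \Mat_{n_i}(\C[z]/(g_i^{(0)}(z)))$, or more generally from the algebra $A'$ over $\K$. Since $\im(f)$ is a free $\R$-lattice in $\im(f)[t^{-1}]\subseteq A'$, $N_0$ lies in the orbit closure of the generic fibre.
\item Show the generic fibre, namely $\bigoplus_i\Mat_{n_i}(B_i)$ after base change to $\overline{\K}$, degenerates from $A$. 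Each $\overline{\K}[z]/(g_i)$ is a product of local algebras $\overline{\K}[z]/((z-a)^e)$. The algebra $\C[z]/(z^e)$ is a degeneration of $\C^e=\C[z]/(\prod_j(z-\epsilon c_j))$ as $\epsilon\to0$, and tensoring with $\Mat_{n_i}$ preserves this, so $\Mat_{n_i}(\overline{\K}[z]/(g_i))$ lies in the orbit closure of $\Mat_{n_i}(\overline{\K})^{\oplus d_i}$.
\item Combine the two steps using transitivity of degeneration (orbit closures are closed and orbit-stable). Algebraic closedness lets us pass from $\overline{\K}$-points to $\C$-points, since $A$ is defined over $\C$ and orbit closures are defined over $\C$.
\end{enumerate}

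The main obstacle is making step~(3) rigorous in the language of flat families over $\R$ rather than abstract orbit closures. Concretely, we need a single one-parameter family over $\C[[t]]$ with generic fibre $A$ and special fibre $N_0$, built from a two-parameter family. The first attempt would be to compose: substitute $s=t^m$ for large $m$ in the $s$-deformation of the structure constants of $\im(f)$ relative to the basis $f(q_i)$. Then use that structure constants depending polynomially (or as power series) on $(t,s)$, with invertible generic fibre, restrict along a curve $s=t^m$ to a flat family whose generic fibre is semisimple of type $A$. If this is awkward, the fallback is to cite the standard fact that degeneration is transitive and that the flat-deformation relation used in \cite{JelisiejewSmoktunowicz} coincides with orbit-closure containment.
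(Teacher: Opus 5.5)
Your skeleton matches the paper's. Square-freeness enters only in identifying the generic fibre. $L=\im(f)$ is still free of rank $N$, with $L/tL\cong N_0$ and $L[t^{-1}]=A'$. After splitting the $g_i$, the generic fibre is a sum of matrix algebras over truncated polynomial rings, each $\C[\varepsilon]/(\varepsilon^c)$ deforms to $\C^{\oplus c}$, and transitivity finishes.

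The gap is the step you yourself flag as the main obstacle: how a flat $\R$-family whose generic fibre is a $\K$-algebra yields a deformation between $\C$-algebras. Your step~1 does not supply it.

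\begin{itemize}
\item Its first version is false. The multiplicities that matter are those of the roots of $g_i$ over $\overline{\K}$, not those of $g_i^{(0)}$. Take $k'=1$, $n_1=1$, $g_1=z^2-t^2$ and $f(x)=t^{-1}u_1$. Then $f(x)^2=1$, $\im(f)=\R\oplus\R t^{-1}u_1\subseteq t^{-1}M\{t\}$, and $N_0\cong\C^2$. This is not a degeneration of $C=\C[z]/(z^2)$, since $C$ is itself a proper degeneration of $\C^2$.
\item Its second version, ``$N_0$ lies in the orbit closure of the generic fibre'', has no meaning until the generic fibre is identified with the scalar extension of some $\C$-algebra.
\item Your proposed repair by substituting $s=t^m$ needs the two-parameter family in $(t,s)$. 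You already conceded this family does not exist, because $f$ does not move with $g_i$.
\end{itemize}

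The missing idea, which is how the paper proceeds, is a finite base change of the family itself rather than a perturbation. Choose a Puiseux extension $\C((v))\supseteq\K$, with $t=v^m$, over which every $g_i$ splits, and put $L'=L\otimes_\R\C[[v]]$. Then:
\begin{itemize}
\item $L'$ is free of rank $N$ with $L'/vL'\cong N_0$; substitute $t=v^m$ into the structure constants.
\item By the Chinese remainder theorem, $L'[v^{-1}]\cong A'''\otimes_\C\C((v))$, where $A'''=\bigoplus_i\Mat_{n_i}\bigl(\bigoplus_j\C[\varepsilon]/(\varepsilon^{c_{i,j}})\bigr)$ and the $c_{i,j}$ are the root multiplicities of $g_i$.
\end{itemize}
So $N_0$ deforms to the $\C$-algebra $A'''$ along an honest one-parameter family. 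Your step~2, realised by $\C[[v]][z]/\bigl(\prod_{j=1}^{c}(z-jv)\bigr)$, deforms $A'''$ to $A$, and \cite[Theorem~2.20]{JelisiejewSmoktunowicz} composes the two. No two-parameter family is needed.

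If you prefer orbit-closure language, the same input suffices. The structure constants of $L'$ give a $\C[[v]]$-point of the variety of algebra structures. Its generic point lies in the closed, $\C$-defined set $\overline{\mathrm{GL}_N\cdot A'''}\subseteq\overline{\mathrm{GL}_N\cdot A}$, hence so does its closed point $N_0$. You must still state the splitting $L'[v^{-1}]\cong A'''\otimes_\C\C((v))$ explicitly, and cite that orbit-closure containment yields a flat deformation in the sense used here.
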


\begin{proof} 
Let \[L=\im (f).\]
 
The construction of  $q_{1}, \ldots , q_{N},$ the freeness of $\im (f)$, and the identification of the
special fibre $N_{0}$ with $\im (f)/t\im (f)$  in Theorem \ref{thm:method}  do not use square-freeness; that hypothesis is used only
in the identification of the generic fibre $A$ (this only appears on page $29$, above Corollary $28$ in \cite{SmoktunowiczDeformations}). Note also that in the current notation we use $N$ instead of $n$ which was used in \cite{SmoktunowiczDeformations}. 

 Hence $L$ is free of rank $N$, \[L[t^{-1}]= A',\] and
\[N_{0}=L/tL.\] 
 So $L=\im (f)$ is a deformation of $N_{0}$, as in Theorem \ref{thm:method}. 

Choose a finite Puiseux extension \[{\mathcal K}\subseteq {\mathcal K} '={\mathbb C}((v)), t=v^{m}\] 
over which all $g_i$ split. Put ${\R} '=\C[[v]]$, 
 the power-series ring in variable $v$. Let \[L'=L\otimes _{\mathcal R}{\mathcal R}'.\]
  
 Since $L$ is a formal deformation of $N_{0}$, $L'$ is a formal deformation of $N_{0}$.
 Indeed, in the formulas for the multiplication of elements in the formal deformation of $N_{0}$ (as in 5.1.4 on page $100$ of  \cite{SW})  we can substitute $t:=v^{m}$, to obtain a formal deformation $L'$ of the algebra $N_{0}$ in variable $v$.

 We will now show that $L'$ is a flat deformation to  the algebra  $A'''$ described below. 

  Since ${\mathcal R}'$ is flat over ${\mathcal R}$, and since $L[t^{-1}]\cong A'$, we have 
\[L'[v^{-1}]\cong A'\otimes _{\mathcal K}{\mathcal K}'.\]
 Write  
\[g_{i}(z)=\prod _{j=1}^{s_{i}}(z-a_{i,j})^{c_{i,j}},\]
 with distinct $a_{i,j}\in K'$, the Chinese remainder theorem gives
\[A'\otimes _{\mathcal K}{\mathcal K}'\cong 
\bigoplus _{i=1}^{k'}M_{n_{i}}(\bigoplus _{j=1}^{s_{i}}{\mathcal K}'[{\varepsilon}]/({\varepsilon }^{c_{i,j}})).\]

Define a finite-dimensional $\C$-algebra
\[
  A'''=
  \bigoplus_{i=1}^{k'}
  \Mat_{n_i}\!\left(
    \bigoplus_{j=1}^{s_i}\C[\varepsilon]/(\varepsilon^{c_{i,j}})
  \right).
\]
Then
\[L'[v^{-1}] \cong A'''\otimes {_\mathbb C}{\mathcal K}',\]
so $L'$ is a deformation of $N_{0}\cong L'/vL'$ to $A'''$  (indeed a split deformation over the parameter $v$).
  
For every $c\geq1$, the algebra $\C[\varepsilon]/(\varepsilon^c)$ flatly
deforms to $\C^{\oplus c}$; for example define 
\[{\mathcal A}_{c} ={\mathbb C}[[v]][z]/(\prod _{j=1}^{c}(z-jv)).\]
At $v=0$, 
\[ {\mathcal A}_{c}/v{\mathcal A}_{c}\cong {\mathbb C}[z]/(z^{c}),\]
 while the roots $v, 2v, \ldots , cv$ are distinct, so by the Chinese remainder theorem
 \[ {\mathcal A}_{c}[v^{-1}]\cong {\mathbb C}((v))^{\oplus c} .\]
  Since the defining polynomial is monic of degree $c$, ${\mathcal A}_{c}$ is free of rank $c$ over ${\mathbb C}[[v]]$.
 Consequently, $A'''$ flatly deforms to
\[
  \bigoplus_{i=1}^{k'}\Mat_{n_i}(\C)^{\oplus d_i}=A.
\]
 Therefore, $N_{0}$ deforms flatly  to $A'''$, and $A'''$ deforms flatly to $A$.  
By transitivity of finite-dimensional deformations
\cite[Theorem~2.20]{JelisiejewSmoktunowicz}, $N_{0}$ deforms flatly  to $A$. 
\end{proof}

\subsection{ Injectivity assumption can be simplified}
  We will use the algebraic analogue of
\emph{Goursat's lemma} \cite{G}. Historically, the result originates in group theory.

\begin{lemma}[Subdirect-product lemma]\label{lem:subdirect}
Let $K$ be a field and let $S\subseteq B_1\oplus\cdots\oplus B_r$ be a
$K$-subalgebra whose projection to each $B_i$ is surjective. Assume that for every pair $i\neq j$ the algebras $B_i$ and $B_j$ have no
\emph{common nonzero quotient algebra}.  Explicitly, this means that there do
not exist two-sided ideals $I_i\lhd B_i$ and $I_j\lhd B_j$ such that
\[
 B_i/I_i\cong B_j/I_j\neq0.
\]
Then
\[
  S=B_1\oplus\cdots\oplus B_r.
\]
\end{lemma}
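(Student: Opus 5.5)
We argue by induction on $r$, the case $r=1$ being trivial since the projection is surjective. The key step is the case $r=2$, which is Goursat's lemma for algebras. The general case is then reduced to it.

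\emph{Case $r=2$.} Let $S\subseteq B_1\oplus B_2$ project onto both factors. Put
\[
I_1=\{b\in B_1:(b,0)\in S\},\qquad I_2=\{b\in B_2:(0,b)\in S\}.
\]
Since $S$ is a subalgebra and $\pi_1(S)=B_1$, the set $I_1$ is a two-sided ideal of $B_1$. Indeed, for $b\in B_1$ choose $(b,c)\in S$; then $(b,c)(a,0)=(ba,0)$ and $(a,0)(b,c)=(ab,0)$ lie in $S$. The same argument shows $I_2\lhd B_2$.

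Define $\phi:B_1\to B_2/I_2$ by $\phi(b)=c+I_2$, where $(b,c)\in S$ is any element with first coordinate $b$. This is well defined: if $(b,c),(b,c')\in S$, then $(0,c-c')\in S$, so $c-c'\in I_2$. The map $\phi$ is a $K$-algebra homomorphism because $S$ is a subalgebra. It is surjective since $\pi_2(S)=B_2$. Its kernel is exactly $I_1$: we have $\phi(b)=0$ iff some $(b,c)\in S$ has $c\in I_2$, iff $(b,0)=(b,c)-(0,c)\in S$. Hence
\[
B_1/I_1\cong B_2/I_2.
\]
By hypothesis this common quotient must be $0$, so $I_1=B_1$ and $I_2=B_2$. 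Therefore $B_1\oplus 0\subseteq S$ and $0\oplus B_2\subseteq S$, which gives $S=B_1\oplus B_2$.

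\emph{Inductive step.} Suppose $r\ge 3$ and set $C=B_2\oplus\cdots\oplus B_r$. The projection $S'$ of $S$ to $C$ is a subalgebra of $C$ projecting onto each $B_j$ for $j\ge 2$. By induction, $S'=C$. Thus $S\subseteq B_1\oplus C$ projects onto both $B_1$ and $C$.

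To apply the case $r=2$, we must check that $B_1$ and $C$ have no common nonzero quotient. This is the main obstacle. Suppose $B_1/I\cong C/J\neq 0$. The pairwise hypothesis does not immediately pass to direct sums, since a quotient of $C$ need not be a quotient of a single factor $B_j$. Here we need that ideals of $C$ decompose. Because $C$ is unital with central idempotents $e_j$, every two-sided ideal of $C$ has the form $J=J_2\oplus\cdots\oplus J_r$ with $J_j\lhd B_j$. Hence
\[
C/J\cong\bigoplus_{j\geq 2} B_j/J_j.
\]
Pick $j$ with $B_j/J_j\neq 0$. Then $B_j/J_j$ is a quotient of $C/J\cong B_1/I$, hence a quotient of $B_1$, and it is also a quotient of $B_j$. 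This is a nonzero common quotient of $B_1$ and $B_j$, contradicting the hypothesis.

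So the case $r=2$ applies, and
\[
S=B_1\oplus C=B_1\oplus\cdots\oplus B_r.
\]
The argument uses unitality of the $B_i$; in the paper's application they are unital matrix algebras over fields, so this holds. For non-unital $B_i$ one would instead have to strengthen the hypothesis to exclude common quotients with finite sums.
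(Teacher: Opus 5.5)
Your proof is correct and takes essentially the same approach as the paper: the two-factor Goursat isomorphism $B_1/I_1\cong B_2/I_2$, then induction, where the decomposition of two-sided ideals of the finite direct product $C$ transfers the pairwise hypothesis to the pair $(B_1,C)$. The paper peels off $B_r$ rather than $B_1$, which makes no difference, and your remark that this ideal decomposition needs unitality of the $B_i$ makes explicit an assumption the paper leaves implicit (it holds in the application).
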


\begin{proof}
For two factors this is the usual Goursat argument: a proper subdirect product
of $B_1\oplus B_2$ determines ideals $I_i\lhd B_i$ possibly zero, such that 
$B_1/I_1\cong B_2/I_2$. The general case follows by induction.
 A detailed argument: 
We first prove the assertion for two factors.  Let
\[
 S\subseteq A\oplus B
\]
project surjectively onto both $A$ and $B$. Define
\[
 I=\{a\in A:(a,0)\in S\},
 \qquad
 J=\{b\in B:(0,b)\in S\}.
\]
Then $I\lhd A$ and $J\lhd B$ are two-sided ideals (they are two-sided since $(c, d)(a,0) = (ca,0)\in S$,
$(a, 0)(c,d) = (ac,0) \in S$).  For example, if $a\in I$
and $c\in A$, choose $d\in B$ such that $(c,d)\in S$. 
%

We now construct the Goursat isomorphism
\[
 \varphi:A/I\longrightarrow B/J.
\]
Given $a\in A$, choose $b\in B$ with $(a,b)\in S$ and put
\[
 \varphi(a+I)=b+J.
\]
This is well defined.  Indeed, if $(a,b),(a,b')\in S$, then
$(0,b-b')\in S$, so $b-b'\in J$.  If $a'-a\in I$ and
$(a,b),(a',b')\in S$, then $(a'-a,0)\in S$ and hence
$(0,b'-b)\in S$, so again $b'-b\in J$.

The map $\varphi$ is a $K$-algebra homomorphism because $S$ is closed under
addition, scalar multiplication and multiplication.  It is surjective because
the projection $S\to B$ is surjective.  It is injective as follows: if
$\varphi(a+I)=0$, choose $(a,b)\in S$.  Then $b\in J$, so $(0,b)\in S$, and
therefore $(a,0)=(a,b)-(0,b)\in S$; hence $a\in I$.  Thus
\begin{equation}\label{eq:goursatquotient}
 A/I\cong B/J.
\end{equation}
More precisely,
\[
 S=\{(a,b)\in A\oplus B:\varphi(a+I)=b+J\},
\]
so $S$ is the fibre product of $A$ and $B$ over their common quotient.

If $S$ is a proper subalgebra of $A\oplus B$, then the quotient in
\eqref{eq:goursatquotient} is nonzero.  Indeed, if $A/I=0$, then $I=A$ and,
by the isomorphism, $J=B$.  Hence $A\oplus0\subseteq S$ and
$0\oplus B\subseteq S$, which gives $S=A\oplus B$, a contradiction.  Notice
that the ideals $I,J$ themselves need not be nonzero; what is forced to be
nonzero for a proper subdirect product is the common quotient $A/I\cong B/J$.
This proves the two-factor case.

We prove the general case by induction on $r$.  Assume the result for $r-1$
factors and put
\[
 C=B_1\oplus\cdots\oplus B_{r-1}.
\]
Let $T$ be the image of $S$ under the projection onto $C$.  The projection of
$T$ onto each $B_i$, $i<r$, is surjective, so the induction hypothesis gives
$T=C$.  Hence $S$ is a subdirect product of the two algebras $C$ and $B_r$.
If $S\neq C\oplus B_r$, the two-factor case produces ideals
$I\lhd C$, $J\lhd B_r$ and a common nonzero quotient
\[
 C/I\cong B_r/J\neq0.
\]
Every two-sided ideal of the finite direct product $C$ has the form
\[
 I=I_1\oplus\cdots\oplus I_{r-1},
 \qquad I_i\lhd B_i,
\]
so
\[
 C/I\cong
 (B_1/I_1)\oplus\cdots\oplus(B_{r-1}/I_{r-1}).
\]
Because $C/I\neq0$, at least one $B_i/I_i$ is nonzero.  Composing the
surjection
\[
 B_r\twoheadrightarrow B_r/J\cong C/I
\]
with the projection $C/I\twoheadrightarrow B_i/I_i$ shows that $B_i/I_i$ is a
common nonzero quotient of $B_i$ and $B_r$, contrary to the hypothesis.  Thus
$S=C\oplus B_r$, completing the induction.
\end{proof}

\begin{theorem}\label{New}
Assume that $f$ satisfies the assumptions of Theorem \ref{thm:nonsquarefree} except for the assumption that $\im (f)$ has $N$ independent images. 
 Let $A_{1}', \ldots , A_{k'}$ and $f_{1}, \ldots , f_{k'}$ be as in Definition \ref{213}. Here,  $A_{i}'=M_{n_{i}}(B_{i})$ with $B_{i}={\K}[z]/(g_{i}(z)).$  Assume that for each $i\leq k'$, $\dim_{\K}\K\im (f_{i})=\dim _{\K} A_{i}'$. Assume that $n_{i}\neq n_{j}$ for $i\neq j$ for $i,j\leq k'$. Denote 
\[
  N_0\cong \im(f)/t\im(f).
\] Then all the assumptions of Theorem ~\ref{thm:nonsquarefree} hold, and there is a flat deformation from the $\mathbb C$-algebra $N_{0}$ to $A$. 
\end{theorem}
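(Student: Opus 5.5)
The plan is to show that $\K\im(f)=A'$. Then, since $\dim_{\K}A'=N$, some $N$ elements of $\im(f)$ are $\K$-linearly independent. Clearing powers of $t$ lets us take them of the form $f(m_i)$ with $m_i\in\C\langle x_1,\dots,x_\rho\rangle$, independent over $\R$. With this, hypothesis (2) holds and Theorem~\ref{thm:nonsquarefree} applies directly.

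First step: reduce to monomials with complex coefficients. $\im(f)$ is the $\R$-span of the images of monomials in $x_1,\dots,x_\rho$, so $S:=\K\im(f)$ is the $\K$-span of $\{f(w)\}$, $w$ ranging over words. Any $\K$-basis of $S$ can be chosen among these $f(w)$. Because $\K$ is a field, $\K$-independence of finitely many $f(w)$ gives $\R$-independence. Thus it suffices to prove $\dim_{\K}S=N=\sum_i\dim_{\K}A_i'$.

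Second step: apply Lemma~\ref{lem:subdirect}. $S$ is a $\K$-subalgebra of $A'=\bigoplus_i A_i'$, and its projection to $A_i'$ equals $\K\im(f_i)$. By the dimension hypothesis this is all of $A_i'$, so $S$ is a subdirect product. It remains to check that $A_i'$ and $A_j'$ have no common nonzero quotient for $i\neq j$. Two-sided ideals of $\Mat_{n_i}(B_i)$ are of the form $\Mat_{n_i}(I)$ with $I\lhd B_i$. Hence every nonzero quotient is $\Mat_{n_i}(C)$ for a nonzero commutative finite-dimensional $\K$-algebra $C=B_i/I$.

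Main obstacle: showing $\Mat_{n_i}(C)\not\cong\Mat_{n_j}(D)$ when $n_i\neq n_j$ and $C,D$ are nonzero commutative. I would pass to a further quotient by a maximal ideal of the quotient ring. A quotient of $\Mat_{n_i}(C)$ by a maximal two-sided ideal is $\Mat_{n_i}(F)$ with $F$ a field, a finite extension of $\K$. An isomorphism $\Mat_{n_i}(C)\cong\Mat_{n_j}(D)$ matches maximal ideals, so it would give $\Mat_{n_i}(F)\cong\Mat_{n_j}(F')$ as $\K$-algebras. Comparing centers gives $F\cong F'$. By Wedderburn uniqueness (or comparing $\K$-dimensions $n_i^2[F:\K]=n_j^2[F':\K]$ with $[F:\K]=[F':\K]$), this forces $n_i=n_j$, a contradiction. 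Hence Lemma~\ref{lem:subdirect} gives $S=A'$, so $\dim_{\K}S=N$. Hypotheses (1) and (3) are assumed, so all assumptions of Theorem~\ref{thm:nonsquarefree} hold, and it yields the flat deformation from $N_0$ to $A$.
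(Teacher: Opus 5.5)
Your proposal is correct and follows essentially the same route as the paper: apply Lemma~\ref{lem:subdirect} to $S=\K\im(f)\subseteq\bigoplus_i A_i'$ (with $\K\im(f_i)=A_i'$ by the dimension hypothesis), use that two-sided ideals of $\Mat_{n_i}(B_i)$ have the form $\Mat_{n_i}(I)$, and then pick $N$ words whose images form a $\K$-basis and are therefore $\R$-independent. The only difference is how you rule out a common nonzero quotient: the paper uses standard polynomial identities, while you pass to a simple quotient $\Mat_{n_i}(F)$ and compare centres and $\K$-dimensions, which is equally valid.
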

\begin{proof}  Recall that $\dim _{\K}A'=\sum_{i=1}^{k'}\dim_{\K}A_{i}=N$.  We apply Lemma \ref{lem:subdirect}
 to $S=\K\im (f)$, $B_{i}=\K\im (f_{i})$ for all $i\leq k'$. The hypothesis of our theorem gives $\K\im (f_{i})=A_{i}'$ for every component, and Lemma \ref{lem:subdirect} gives surjectivity onto the full product (once we verify its hypothesis). Therefore, \[\dim _{\K}\K\im (f)=\sum_{i=1}^{k'}\dim _{\K}\K\im (f_{i})=\dim _{\K} A'=N.\]
  Therefore, one can select $N$ words $m_{1}, \ldots , m_{N}$ whose images $f(m_{1}), \ldots , f(m_{N})$ form a $\K$-basis of $\im (f)$, 
 so the assumptions of Theorem \ref{thm:nonsquarefree} hold.
We will now  check that the assumptions of Lemma  \ref{lem:subdirect} hold, so that $\K\im (f_{1}),\ldots ,  \K\im (f_{k'})$ have no common nonzero quotients, so it can be applied. 
  Every two-sided ideal of $M_{n_i}(B_i)$ is $M_{n_i}(I)$ for an ideal $I\lhd B_i$, so every quotient has the form
\[
M_{n_i}(B_{i}/I).
\]
 Therefore, every nonzero quotient of $M_{n_i}(B_{i})$ satisfies the standard polynomial identity of $n_i$-by-$n_i$ matrices, and it does not satisfy the standard polynomial identity of $n_{j}$-by-$n_{j}$ matrices for $n_{j}<n_{i}$ (since $B_{i}$ is a unital algebra). Therefore, for $i> j$, $M_{n_i}(B_{i})$ and $M_{n_j}(B_{j})$ do not have a  common nonzero quotient.  This proves that Lemma \ref{lem:subdirect} can be applied.
\end{proof}

\section{Deformations from the contraction algebra to the semisimple algebra}

Let $\C\langle x,y\rangle$ denote the free unital associative $\C$-algebra on
$x,y$. Let
\[
  p(y)\in\C[y],\qquad 0\neq p(y),\qquad \ord(p)=4,
\]
and put
\[
  g(y)=p'(y).
\]
Then
\[
  g(y)=a_0y^3+y^4\widetilde g(y),
  \qquad a_0\in\C^\times,
\]
for some $\widetilde g(y)\in\C[y]$.

Consider the potential
\[
  x^3+xy^3+p(y).
\]
Its Jacobi relations are
\[
  3x^2+y^3=0
\]
and
\[
  y^2x+yxy+xy^2+g(y)=0.
\]
Let $I$ be the ideal generated by these two elements and let
\[
  \mathfrak m=(x,y)\subset\C\langle x,y\rangle.
\]
The completed contraction algebra is
\[
  N=\C\langle x,y\rangle/I',
  \qquad
  I'=\bigcap_{r\geq1}(I+\mathfrak m^r).
\]
We use the known facts \cite{MW} that $N$ is local, that the ideal generated by the images
of $x$ and $y$ is nilpotent, and that
\[
  \dim_{\C}N=27=6+3\cdot4+9.
\]

\begin{lemma}\label{nieuzyte}
One has
\[
  I'=I+\mathfrak m^{29}.
\]
Equivalently,
\[
 N\cong\C\langle x,y\rangle/(I+\mathfrak m^{29}).
\]
\end{lemma}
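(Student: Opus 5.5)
We prove the two inclusions $I+\mathfrak m^{29}\subseteq I'$ and $I'\subseteq I+\mathfrak m^{29}$; the second reduces to showing $\mathfrak m^{r}\subseteq I+\mathfrak m^{r+1}$ for all $r\geq 28$.

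\textbf{The inclusion $I+\mathfrak m^{29}\subseteq I'$.} We use the facts from \cite{MW} quoted above. Since $N$ is local with nilpotent maximal ideal $\bar{\mathfrak m}$, there is some $s$ with $\bar{\mathfrak m}^s=0$, i.e.\ $\mathfrak m^s\subseteq I'$. The powers $\bar{\mathfrak m}^r$ strictly decrease until they vanish: if $\bar{\mathfrak m}^r=\bar{\mathfrak m}^{r+1}$, then nilpotence forces $\bar{\mathfrak m}^r=0$. Since $\dim_\C N=27$ and $\dim N/\bar{\mathfrak m}=1$, the strict chain
\[
N\supsetneq\bar{\mathfrak m}\supsetneq\bar{\mathfrak m}^2\supsetneq\cdots
\]
reaches $0$ after at most $27$ steps. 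Hence $\bar{\mathfrak m}^{27}=0$, so $\mathfrak m^{27}\subseteq I'$. In particular $\mathfrak m^{29}\subseteq I'$, and trivially $I\subseteq I'$.

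\textbf{The inclusion $I'\subseteq I+\mathfrak m^{29}$.} This is where the real work lies. By definition $I'\subseteq I+\mathfrak m^r$ for every $r$, so it suffices to show that $I+\mathfrak m^{r}=I+\mathfrak m^{29}$ for all $r\geq 29$. This follows by induction once we have $\mathfrak m^{28}\subseteq I+\mathfrak m^{29}$: multiplying this inclusion by $\mathfrak m$ on either side gives $\mathfrak m^{29}\subseteq I+\mathfrak m^{30}$, and so on.

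So the key step is a \emph{non-completed} statement: in $\C\langle x,y\rangle/(I+\mathfrak m^{29})$, every word of length $28$ lies in $I+\mathfrak m^{29}$. I would argue as follows. The algebra $Q=\C\langle x,y\rangle/(I+\mathfrak m^{r})$ surjects onto $N$, because $I+\mathfrak m^{r}\subseteq I'$ for $r\geq 27$. On the other hand, $Q$ is a quotient of $\C\langle x,y\rangle/(I+\mathfrak m^{r})$ taken in the completed setting, and for $r\geq 27$ the completed algebra $\C\langle\langle x,y\rangle\rangle/\overline{(I)}+\mathfrak m^r$ equals $N$. It then remains to check that $Q\cong N$ for $r$ large, which amounts to showing that the filtration on $Q$ stabilises. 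Concretely, I would use the relations to show directly that $\mathfrak m^{k}\subseteq I+\mathfrak m^{k+1}$ for some fixed $k\leq 28$.

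This is the step I expect to be the main obstacle. The relations are not homogeneous: $3x^2+y^3$ has terms of degrees $2$ and $3$, and $y^2x+yxy+xy^2+g(y)$ has terms of degree $3$ together with $a_0y^3$ and higher. The cleanest route is to assign weights $\deg x=3$, $\deg y=2$. Under these weights the leading parts are $3x^2+y^3$, of weight $6$, and $y^2x+yxy+xy^2+a_0y^3$, of weight $7$ versus $6$. I would then check which weighting makes the pair quasi-homogeneous modulo higher-order terms, possibly $\deg y=3,\ \deg x=4$ or similar, and work in the associated graded algebra. There one verifies by a Gröbner-basis-style reduction of words (for instance, using $x^2\equiv -y^3/3$ to eliminate $x^2$, and the second relation to express $y^3$ in terms of mixed words) that all words of sufficiently large weight vanish. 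This yields a bound of the form $\mathfrak m^{k}\subseteq I+\mathfrak m^{k+1}$, and hence $\mathfrak m^{k}\subseteq I+\mathfrak m^{M}$ for every $M$.

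Granting this, $Q$ is finite-dimensional and already complete, so it equals $N$, and the stabilisation index is at most $28$ by the length-of-chain argument above. Thus $I'=I+\mathfrak m^{29}$.
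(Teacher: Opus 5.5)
Your first inclusion is correct and is the paper's argument. The powers of the nilpotent maximal ideal of $N$ decrease strictly until they vanish, and $\dim_{\C}N=27$; you even obtain the sharper $\bar{\mathfrak m}^{27}=0$, i.e.\ $\mathfrak m^{27}\subseteq I'$.

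The second half rests on a misdiagnosis. The inclusion $I'\subseteq I+\mathfrak m^{29}$ involves no work. $I'$ is \emph{defined} as $\bigcap_{r\geq 1}(I+\mathfrak m^r)$ inside the free algebra $\C\langle x,y\rangle$, so taking $r=29$ gives $I'\subseteq I+\mathfrak m^{29}$ immediately. You wrote exactly this sentence (``by definition $I'\subseteq I+\mathfrak m^r$ for every $r$'') and then did not specialise it. Your ``key step'' $\mathfrak m^{28}\subseteq I+\mathfrak m^{29}$ is also immediate from your own first half, since $\mathfrak m^{27}\subseteq I'\subseteq I+\mathfrak m^{r}$ for every $r$. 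The same line gives the stabilisation $I+\mathfrak m^r=I+\mathfrak m^{27}$ for all $r\geq 27$. No weighted filtration, associated graded algebra or Gr\"obner reduction is needed. The discussion of $Q$ versus a ``completed setting'' is beside the point, because $I'$ lives in $\C\langle x,y\rangle$ and no completion enters the statement.

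As written, though, your route does not actually establish the second inclusion, so there is a gap exactly where you placed the ``main obstacle''. The weighted-degree argument is only a plan. You leave open which weights to use, and in fact no positive weights make both relations quasi-homogeneous: that would require $2w_x=3w_y$ from $3x^2+y^3$, and $w_x=w_y$ from $y^2x+a_0y^3+\cdots$. Your final paragraph then explicitly ``grants'' this step. The gap is closed by the one-line observation above, which is precisely how the paper finishes: the reverse inclusion ``follows immediately from the definition of $I'$ by taking $r=29$.''
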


\begin{proof}
 Let $J$ be the maximal ideal of $N$. Since $N$ is local and finite-dimensional,
$J^r=J^{r+1}\neq0$ cannot occur: otherwise Nakayama's lemma applied to the
$N$-module $J^r$ would give $J^r=0$. Hence the nonzero powers of $J$ form a
strictly decreasing chain. Since $\dim_{\C}N=27$, certainly $J^{29}=0$.
Therefore $\mathfrak m^{29}\subseteq I'$, so
$I+\mathfrak m^{29}\subseteq I'$. The reverse inclusion follows immediately
from the definition of $I'$ by taking $r=29$.
\end{proof}

\section{ Contraction algebras with $\ord(p)=4$}\label{Section3}
Our target algebra is
\[
  A=\C^{\oplus6}\oplus\Mat_2(\C)^{\oplus3}\oplus\Mat_3(\C).
\]
We now construct a homomorphism to which Theorem~\ref{New}
applies.

\subsection{Preliminary notation and the common deformed relations}

Write $g$ according to congruence classes of exponents modulo $3$:
\begin{equation}\label{eq:gdecomp1}
  g(y)=g_0(y^3)+g_1(y^3)y+g_2(y^3)y^2,
\end{equation}
where $g_0,g_1,g_2\in\C[z]$. Since $g(y)\in y^3\C[y]$, one has
\[
  g_0(0)=g_1(0)=g_2(0)=0,
\]
and, because the coefficient of $y^3$ in $g$ is $a_0\neq0$,
\[
  g_0(z)=a_0z+O(z^2).
\]

Choose once and for all
\[
  \sigma\in\C\setminus\{0,1\}
\]
and a primitive cube root of unity $\omega\in\C$, so
\[
  \omega^3=1,\qquad \omega\neq1,
  \qquad 1+\omega+\omega^2=0.
\]
Define
\begin{align}
  s(t)&=-g_0(\sigma t^3),\\
  r(t)&=-g_1(\sigma t^3),\\
  \ell(t)&=-g_2(\sigma t^3),\\
  \gamma(t)&=2\bigl(g_2(t^3)-g_2(\sigma t^3)\bigr),\label{eq:gamma}\\
  \alpha(t)&=t^{-3}\bigl(g_0(\sigma t^3)-g_0(t^3)\bigr),\label{eq:alpha}\\
  \beta(t)&=t^{-3}\bigl(g_1(\sigma t^3)-g_1(t^3)\bigr).\label{eq:beta}
\end{align}
Then
\[
  s,r,\ell,\gamma\in t^3\R,
  \qquad \alpha,\beta\in\R.
\]
Moreover,
\[
  \alpha(t)=a_0(\sigma-1)+O(t^3),
\]
so $\alpha(t)$ is a unit of $\R$.

We shall arrange that $f$  annihilates
\begin{equation}\label{eq:Phi1}
  \Phi_1
  =x^2-\gamma(t)x+\frac{1}{3}(y^3-t^3)
\end{equation}
and
\begin{equation}\label{eq:Phi2}
\begin{split}
  \Phi_2={}&y^2x+yxy+xy^2+g(y)
  +s(t)+r(t)y+\ell(t)y^2
  -\frac{3}{2}\gamma(t)y^2.
\end{split}
\end{equation}
Modulo $t$, these relations are equivalent to the two Jacobi relations.
 \subsection{The $3\times3$ component}\label{3by3}
 Let $g_{3}(z) = z.$ 
Then
\[d_{3} = 1, B_{3}=K[z]/(z)  \cong  K, u_{3} = 0.\]
\[A_{3}'= M_{3}(B_{3}) \cong M_{3}(K),\]
so the third component fits the notation of Section \ref{M1} exactly. Therefore, we do not need to use the element $u_{3}$  in our construction and we can put
\[
  A_3'=\Mat_3(\K).
\] 
Define
\[
  X_3=
  \begin{pmatrix}
    \gamma(t)&\alpha(t)&\beta(t)\\
    0&0&0\\
    0&0&0
  \end{pmatrix},
  \qquad
  Y_3=
  \begin{pmatrix}
    0&1&0\\
    0&0&1\\
    t^3&0&0
  \end{pmatrix}.
\]
Let
\[
  f_3:\R\langle x,y\rangle\longrightarrow A_3'
\]
be the unital homomorphism defined by $f_3(x)=X_3$ and $f_3(y)=Y_3$.

We have
\[
  Y_3^3=t^3I_3,
  \qquad
  X_3^2=\gamma(t)X_3.
\]
Hence $f_3(\Phi_1)=0$.

A direct multiplication gives
\begin{equation}\label{eq:f3mixed}
  Y_3^2X_3+X_3Y_3^2+Y_3X_3Y_3
  =t^3\alpha(t)I_3+t^3\beta(t)Y_3+\gamma(t)Y_3^2.
\end{equation}
Since $Y_3^3=t^3I_3$, equation~\eqref{eq:gdecomp1} gives
\[
  g(Y_3)=g_0(t^3)I_3+g_1(t^3)Y_3+g_2(t^3)Y_3^2.
\]
Using the definitions of $s,r,\ell,\alpha,\beta,\gamma$, the coefficients of
$I_3,Y_3,Y_3^2$ in $f_3(\Phi_2)$ all vanish. Thus
\[
  f_3(\Phi_2)=0.
\]

For later reference, some useful products are
\begin{align*}
X_3Y_3&=
\begin{pmatrix}
 t^3\beta&\gamma&\alpha\\0&0&0\\0&0&0
\end{pmatrix},
&
Y_3X_3&=
\begin{pmatrix}
0&0&0\\0&0&0\\t^3\gamma&t^3\alpha&t^3\beta
\end{pmatrix},\\[2mm]
X_3Y_3^2&=
\begin{pmatrix}
 t^3\alpha&t^3\beta&\gamma\\0&0&0\\0&0&0
\end{pmatrix},
&
Y_3X_3Y_3&=
\begin{pmatrix}
0&0&0\\0&0&0\\t^6\beta&t^3\gamma&t^3\alpha
\end{pmatrix},\\[2mm]
Y_3^2X_3&=
\begin{pmatrix}
0&0&0\\t^3\gamma&t^3\alpha&t^3\beta\\0&0&0
\end{pmatrix}.
\end{align*}
Also
\begin{align*}
  X_3Y_3X_3&=t^3\beta(t)X_3,\\
  X_3Y_3^2X_3&=t^3\alpha(t)X_3,\\
  X_3^3&=\gamma(t)^2X_3.
\end{align*}
Two of the longer products that are useful for checking the original matrix
list are
\[
Y_3^2X_3Y_3X_3=
\begin{pmatrix}
0&0&0\\
 t^6\beta\gamma&t^6\alpha\beta&t^6\beta^2\\
0&0&0
\end{pmatrix}
\]
and
\[
Y_3^2X_3Y_3^2=
\begin{pmatrix}
0&0&0\\
 t^6\alpha&t^6\beta&t^3\gamma\\
0&0&0
\end{pmatrix}.
\]

\begin{lemma}\label{lem:f3rank}
The $\K$-algebra generated by $X_3$ and $Y_3$ is $\Mat_3(\K)$.
\end{lemma}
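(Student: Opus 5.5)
The plan is to produce all nine matrix units $E_{ij}$ inside the $\K$-algebra $S$ generated by $X_3$ and $Y_3$. The key observation is that $Y_3$ is invertible over $\K$, since $Y_3^3=t^3I_3$ with $t\neq0$ in $\K$, so $Y_3^{-1}=t^{-3}Y_3^2\in S$. Moreover $Y_3$ acts as a weighted cyclic shift: $E_{11}Y_3=E_{12}$, $E_{12}Y_3=E_{13}$, $E_{13}Y_3=t^3E_{11}$, and similarly $Y_3E_{21}=E_{31}$-type identities hold (more precisely $Y_3E_{j1}$ moves row $j$ to row $j-1$ with a factor $t^3$ when wrapping around). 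Hence once $E_{11}\in S$, the elements $Y_3^aE_{11}Y_3^b$ with $0\le a,b\le 2$ give nonzero $\K$-multiples of every $E_{ij}$, so $S=\Mat_3(\K)$.

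To obtain $E_{11}$, I will use the computed products listed above. The element $X_3$ has image spanned by $e_1$, and its first row is $(\gamma,\alpha,\beta)$. From the listed formulas, $X_3$, $X_3Y_3$ and $X_3Y_3^2$ have first rows $(\gamma,\alpha,\beta)$, $(t^3\beta,\gamma,\alpha)$ and $(t^3\alpha,t^3\beta,\gamma)$ respectively, and all other rows are zero. Thus the $\K$-span of these three elements is $E_{11}\K\oplus E_{12}\K\oplus E_{13}\K$ exactly when the $3\times3$ matrix
\[
C=\begin{pmatrix}\gamma&\alpha&\beta\\ t^3\beta&\gamma&\alpha\\ t^3\alpha&t^3\beta&\gamma\end{pmatrix}
\]
is invertible over $\K$. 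Then $E_{11}$ is a $\K$-combination of $X_3,X_3Y_3,X_3Y_3^2$, and the argument of the first paragraph finishes the proof.

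The main step, and the only nonroutine point, is to show $\det C\neq0$ in $\K$. I will check it via $t$-adic orders: $\alpha$ is a unit of $\R$ with constant term $a_0(\sigma-1)\neq0$, while $\gamma,\beta t^3\in t^3\R$ (and $\beta\in\R$). Expanding, the term $\alpha\cdot\alpha\cdot t^3\alpha=t^3\alpha^3$ (from the permutation $1\to2\to3\to1$ of the second column entries, i.e.\ entries $C_{12}C_{23}C_{31}$) has order exactly $3$. Every other term in the determinant contains either a factor $\gamma$ together with another factor of order $\geq3$, or a factor $t^3\beta\cdot t^3\beta$, or $t^3\alpha\cdot\gamma$, etc., and so lies in $t^6\R$; I will verify this term by term over the six permutations. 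Hence $\det C=t^3\alpha(t)^3+O(t^6)\neq0$, which completes the plan.
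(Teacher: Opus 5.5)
Your proof is correct. It rests on the same key computation as the paper but assembles the conclusion differently.

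The paper takes nine explicit words $I_3, X_3, Y_3, X_3Y_3, Y_3X_3, Y_3^2, X_3Y_3^2, Y_3X_3Y_3, Y_3X_3Y_3^2$. It computes their $9\times 9$ coordinate determinant in block-triangular form as $t^{12}(\det A)^2$. Here $A$ is the coordinate matrix of the first rows of $X_3, X_3Y_3, X_3Y_3^2$, which is exactly the transpose of your $C$, with $\det A=\gamma^3+t^3\alpha^3+t^6\beta^3-3t^3\alpha\beta\gamma=t^3\alpha^3+O(t^6)$. So the only nontrivial input, the $t$-adic nonvanishing of this $3\times 3$ determinant, is identical in both proofs, and your check of the six permutation terms is right.

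You differ in how that input is used. You extract $E_{11}$ from the first-row span and then obtain every $E_{ij}$ as a $\K^\times$-multiple of $Y_3^aE_{11}Y_3^b$, using $Y_3=E_{12}+E_{23}+t^3E_{31}$. This avoids the $9\times 9$ determinant and its block bookkeeping. It also makes transparent why, in the later versions (Lemmas \ref{lem:f3rank2} and \ref{lem:f3rank3}), only $\det A\neq 0$ has to be rechecked, which is what the paper in fact does there.

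The paper's version produces an explicit $\K$-basis of words, although Theorem \ref{New} only needs the dimension count. Your argument gives such a basis too: each of the nine words $Y_3^aX_3Y_3^b$, $0\le a,b\le 2$, is supported in a single row. For each row, the three words supported there have first-row data $r_0,r_1,r_2$ times a nonzero scalar, so they span $\Mat_3(\K)$ precisely when $\det A\neq 0$.

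Two small points. First, the invertibility of $Y_3$ is never actually used; you only need $t^3\in\K^\times$ to clear the wrap-around factor. Second, your phrase about $Y_3E_{21}=E_{31}$-type identities is wrong as written. In fact $Y_3E_{21}=E_{11}$, $Y_3E_{31}=E_{21}$ and $Y_3E_{11}=t^3E_{31}$. Your \emph{more precisely} clause states this correctly, and that clause is all the argument needs.
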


\begin{proof}
Consider the nine elements
\[
 I_3,\ X_3,\ Y_3,\ X_3Y_3,\ Y_3X_3,\ Y_3^2,\ X_3Y_3^2,\
 Y_3X_3Y_3,\ Y_3X_3Y_3^2.
\]
We compute their coordinate determinant explicitly.  Put
\[
  q=t^3,
\]
and abbreviate $\alpha=\alpha(t)$, $\beta=\beta(t)$ and
$\gamma=\gamma(t)$.  Reorder the nine matrices as
\[
  X_3,\ X_3Y_3,\ X_3Y_3^2;
  \qquad I_3,\ Y_3,\ Y_3^2;
  \qquad Y_3X_3,\ Y_3X_3Y_3,\ Y_3X_3Y_3^2.
\]
This permutation of the columns is even, so it does not change the sign of the
coordinate determinant.  Relative to the standard matrix-unit basis
\[
 E_{11},E_{12},E_{13},E_{21},E_{22},E_{23},E_{31},E_{32},E_{33},
\]
the first three matrices are supported in the first row and have coordinate
matrix
\[
 A=
 \begin{pmatrix}
  \gamma&q\beta&q\alpha\\
  \alpha&\gamma&q\beta\\
  \beta&\alpha&\gamma
 \end{pmatrix}.
\]
The last three matrices are supported in the third row and have coordinate
matrix
\[
 B=
 \begin{pmatrix}
  q\gamma&q^2\beta&q^2\alpha\\
  q\alpha&q\gamma&q^2\beta\\
  q\beta&q\alpha&q\gamma
 \end{pmatrix}
 =qA.
\]
Finally, the second rows of $I_3,Y_3,Y_3^2$ have coordinate matrix
\[
 C=
 \begin{pmatrix}
 0&0&q\\
 1&0&0\\
 0&1&0
 \end{pmatrix},
 \qquad \det C=q.
\]
With the rows grouped according to the first, second and third matrix rows, the
full $9\times9$ coordinate matrix has block form
\[
 \mathcal M=
 \begin{pmatrix}
   A&I_3&0\\
   0&C&0\\
   0&Q&B
 \end{pmatrix}
\]
for a certain $3\times3$ matrix $Q$ (its entries are irrelevant for the
determinant).  Expanding first in the first three columns, and then using the
block lower triangular form of the remaining matrix, gives
\[
 \det\mathcal M=(\det A)(\det C)(\det B)
 =q^4(\det A)^2.
\]
A direct $3\times3$ determinant calculation gives
\begin{align*}
 \det A
 &=\gamma
   \begin{vmatrix}\gamma&q\beta\\ \alpha&\gamma\end{vmatrix}
 -q\beta
   \begin{vmatrix}\alpha&q\beta\\ \beta&\gamma\end{vmatrix}
 +q\alpha
   \begin{vmatrix}\alpha&\gamma\\ \beta&\alpha\end{vmatrix}\\
 &=\gamma^3+q\alpha^3+q^2\beta^3-3q\alpha\beta\gamma.
\end{align*}
Consequently
\begin{equation}\label{eq:f3detexact}
 \det\mathcal M
 =t^{12}
 \left(
   \gamma(t)^3+t^3\alpha(t)^3+t^6\beta(t)^3
   -3t^3\alpha(t)\beta(t)\gamma(t)
 \right)^2.
\end{equation}
Now $\gamma(t)\in t^3\R$, $\beta(t)\in\R$, and $\alpha(t)$ is a unit.
Therefore the expression inside the parentheses in
\eqref{eq:f3detexact} is
\[
 t^3\alpha(t)^3+O(t^6).
\]
Thus
\[
 \det\mathcal M
 =t^{18}\alpha(t)^6+O(t^{21}).
\]
Since
\[
 \alpha(t)=a_0(\sigma-1)+O(t^3),
\]
the actual first nonzero $t$-adic term is
\[
  [a_0(\sigma-1)]^6t^{18},
\]
which is nonzero because $a_0\neq0$ and $\sigma\neq1$. Hence the nine matrices
are $\K$-linearly independent. Since $\dim_{\K}\Mat_3(\K)=9$, they form a
basis of $\Mat_3(\K)$, and the $\K$-algebra generated by $X_3,Y_3$ is all of
$\Mat_3(\K)$. 
\end{proof}

Let $L_3=\im(f_3)$. Since $X_3,Y_3\in\Mat_3(\R)$, $L_3\subseteq M_{3}(\R)$.  $\R={\mathbb C}[[t]]$ is Noetherian,
 $L_{3}$ is a finitely generated $\R$-module. It is torsion free and Lemma \ref{lem:f3rank} shows that its rank is $9$.
Modulo $t$ we have
\[
  x^2=y^3=xyx=xy^2x=0,
  \qquad
  y^2x+yxy+xy^2=0.
\]
A direct reduction gives the spanning set
\[
  1,\ x,\ y,\ xy,\ yx,\ y^2,\ yxy,\ y^2x,\ y^2xy
\]
for $L_3/tL_3$. Hence every word of sufficiently large positive degree vanishes
in $L_3/tL_3$; in particular, the ideal generated by $x$ and $y$ is nilpotent.
Thus $L_3/tL_3$ is local.

\subsection{The $2\times2$ component}

Let
\[
  q_2(z)=z^3-\sigma t^3,
  \qquad
  B_2=\K[z]/(q_2(z)),
  \qquad
  u_2=z+(q_2(z)),
\]
and put
\[
  A_2'=\Mat_2(B_2).
\]
Thus $u_2^3=\sigma t^3$.

Set
\begin{equation}\label{eq:cdef}
  c(t)=\frac{\gamma(t)^2}{4}+\frac{1-\sigma}{3}t^3.
\end{equation}
Because $\gamma(t)\in t^3\R$ and $\sigma\neq1$,
\[
  c(t)=\frac{1-\sigma}{3}t^3+O(t^6)\neq0.
\]
Define
\[
  X_2=
  \begin{pmatrix}
    \gamma(t)/2&1\\
    c(t)&\gamma(t)/2
  \end{pmatrix},
  \qquad
  Y_2=
  \begin{pmatrix}
    u_2&0\\
    0&\omega u_2
  \end{pmatrix}.
\]
Let
\[
  f_2:\R\langle x,y\rangle\longrightarrow A_2'
\]
be defined by $f_2(x)=X_2$, $f_2(y)=Y_2$.

Put
\[
  Q=X_2-\frac{\gamma(t)}2I_2
  =\begin{pmatrix}0&1\\c(t)&0\end{pmatrix}.
\]
Then
\[
  Q^2=c(t)I_2,
  \qquad
  Y_2^3=\sigma t^3I_2.
\]
By the definition of $c(t)$,
\[
  Q^2
  =\left(\frac{\gamma(t)^2}{4}+\frac{1-\sigma}{3}t^3\right)I_2,
\]
which is equivalent to $f_2(\Phi_1)=0$.

For the second relation, the off-diagonal entries of
\[
  Y_2^2X_2+Y_2X_2Y_2+X_2Y_2^2
\]
are multiples of
\[
  1+\omega+\omega^2=0,
\]
while its diagonal part is $(3/2)\gamma(t)Y_2^2$. Therefore
\[
  Y_2^2X_2+Y_2X_2Y_2+X_2Y_2^2
  -\frac32\gamma(t)Y_2^2=0.
\]
Also, because $Y_2^3=\sigma t^3I_2$,
\[
  g(Y_2)
  =g_0(\sigma t^3)I_2
   +g_1(\sigma t^3)Y_2
   +g_2(\sigma t^3)Y_2^2.
\]
Hence $f_2(\Phi_2)=0$.

\begin{lemma}\label{lem:f2rank}
The $\K$-algebra generated by $X_2$ and $Y_2$ is $A_2'$.
\end{lemma}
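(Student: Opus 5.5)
The plan is to show directly that the $\K$-subalgebra $S\subseteq A_2'$ generated by $X_2$ and $Y_2$ contains $u_2^kE_{ij}$ for all $k\in\{0,1,2\}$ and all $i,j\in\{1,2\}$. These $12$ elements span $\Mat_2(B_2)$ over $\K$, since $1,u_2,u_2^2$ is a $\K$-basis of $B_2$. Two facts make this work. First, $\gamma(t),c(t)\in\K$ and $c(t)\neq0$, so $c(t)$ is invertible in $\K$. Second, $u_2^3=\sigma t^3$ is a nonzero element of $\K$, so $u_2$ is a unit of $B_2$ with $u_2^{-1}=(\sigma t^3)^{-1}u_2^2$. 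One could instead split $B_2\cong\K^{\oplus3}$ by the Chinese remainder theorem. However, all three factors would be $\Mat_2(\K)$, so Lemma \ref{lem:subdirect} does not apply to them; I will therefore argue inside $\Mat_2(B_2)$ directly.

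\emph{Step 1: getting the off-diagonal units.} Since $\gamma(t)/2\in\K$, we have $Q=X_2-\frac{\gamma(t)}2I_2\in S$. A direct multiplication gives
\[
Y_2Q=\begin{pmatrix}0&u_2\\ \omega c u_2&0\end{pmatrix},\qquad
QY_2=\begin{pmatrix}0&\omega u_2\\ c u_2&0\end{pmatrix}.
\]
Form the two combinations
\[
Y_2Q-\omega^2QY_2=c(\omega-\omega^2)u_2E_{21},\qquad
Y_2Q-\omega QY_2=(1-\omega^2)u_2E_{12}.
\]
Since $\omega\neq\omega^2$, $\omega^2\neq1$ and $c\in\K^\times$, we conclude that $u_2E_{12}$ and $u_2E_{21}$ lie in $S$.

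\emph{Step 2: all powers of $u_2$ in every matrix position.} Multiplying the elements from Step 1 gives
\[
(u_2E_{12})(u_2E_{21})=u_2^2E_{11},\qquad (u_2E_{21})(u_2E_{12})=u_2^2E_{22}.
\]
Further products then give $u_2^4E_{ii}=\sigma t^3u_2E_{ii}$, so $u_2E_{11},u_2E_{22}\in S$. They also give $u_2^6E_{ii}=\sigma^2t^6E_{ii}$, so $E_{11},E_{22}\in S$. Multiplying $E_{11},E_{22}$ by $u_2E_{12}$, $u_2E_{21}$ and by powers of $Y_2$ then produces $u_2^kE_{ij}$ for all $k$ and all $i,j$. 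For instance, $u_2^2E_{12}$ arises as $(u_2E_{11})(u_2E_{12})$, and $E_{12}$ as $(\sigma t^3)^{-1}(u_2^2E_{11})(u_2E_{12})$.

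The only step requiring care is Step 1, namely choosing the two linear combinations of $Y_2Q$ and $QY_2$ that kill one off-diagonal entry each. This relies on $\omega$ being a primitive cube root of unity and on $c(t)\neq0$, which was established right after \eqref{eq:cdef}. Everything after that is bookkeeping using the invertibility of $u_2$ in $B_2$.
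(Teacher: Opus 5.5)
Your proof is correct and is essentially the paper's argument: both work directly in $\Mat_2(B_2)$ and use $Q=X_2-\frac{\gamma(t)}{2}I_2$, $\omega\neq 1$, $c(t)\in\K^\times$ and $u_2^3=\sigma t^3\in\K^\times$. The only difference is the order. The paper first isolates $u_2E_{11},u_2E_{22}$ from $Y_2$ and the diagonal matrix $c(t)^{-1}QY_2Q$, then multiplies by $Q$ to get the off-diagonal units; you first isolate $u_2E_{12},u_2E_{21}$ from $Y_2Q$ and $QY_2$, then multiply them to get the diagonal ones.
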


\begin{proof}
Since $c(t)\neq0$ in $\K$, we may form
\[
  Z=c(t)^{-1}QY_2Q
   =\begin{pmatrix}\omega u_2&0\\0&u_2\end{pmatrix}.
\]
The two diagonal matrices $Y_2$ and $Z$ have linearly independent diagonal
patterns because $\omega\neq1$. Hence their $\K$-linear combinations give
\[
  \begin{pmatrix}u_2&0\\0&0\end{pmatrix},
  \qquad
  \begin{pmatrix}0&0\\0&u_2\end{pmatrix}.
\]
Taking powers and using $u_2^3=\sigma t^3\in\K^\times$ yields all diagonal
matrix units with coefficients in $B_2$. Multiplication by $Q$ gives the two
off-diagonal matrix units, again with arbitrary coefficients in $B_2$.
Therefore the generated algebra is $\Mat_2(B_2)$. This finishes the proof.
\end{proof}

Let $L_2=\im(f_2)$. Recall that $X_2,Y_2$ lie in
$\Mat_2(\R[u_2])$. Since $L_{2}$ is a finitely generated $\R$-module and $\R$ is Noetherian, $L_2$ is a finite torsion-free $\R$-module, and
Lemma~\ref{lem:f2rank} shows that it has rank $12$.
Modulo $t\im (f_{2})$, the relations become
\begin{equation}\label{eq:L2special1}
  X_{2}^2=0,
  \qquad
  Y_{2}^3=0,
  \qquad
  Y_{2}^2X_{2}+Y_{2}X_{2}Y_{2}+X_{2}Y_{2}^2=0.
\end{equation}

Let
\[B ={\mathbb C}\langle X,Y\rangle/(X^{2},Y^{3},Y^{2}X +YXY +XY^{2}).\]
A homogeneous Diamond-lemma computation  with the usual degree-lexicographic monomial order
with  $X>Y$, $\deg X=\deg Y=1$ gives 
 the initial reduction rules:

\[X^{2}\rightarrow 0, Y^{3}\rightarrow 0, XY^{2}\rightarrow -Y^{2}X-YXY.\]
 After resolving the ambiguities we also obtain the reduction rules
\[XYXY\rightarrow YXYX, YXYXY\rightarrow Y^{2}XYX.\]
 All the remaining ambiguities are resolved.

 The list of standard monomials is:  
\[1, X,Y,XY,YX,Y^{2}, XYX, YXY, Y^{2}X, YXYX,Y^{2}XY,Y^{2}XYX.\]

In particular, $\dim _{C} B = 12$, and all words of degree at least $6$ vanish. 
 Relations (\ref{eq:L2special1})  
induce a surjective homomorphism from $B$ to $ L_{2}/tL_{2}$. Since $L_{2}$ is free of rank $12$, one has
$\dim _{C}L_{2}/tL_{2} = 12$, so this homomorphism is an isomorphism.  In particular, $L_{2}/tL_{2}$ is local and its
  ideal generated by $X_{2}+ t\im (f_{2})$ and $Y_{2}+ t\im (f_{2})$ is nilpotent.

\subsection{The commutative degree-six component}\label{34}

Define
\begin{equation}\label{eq:Ddef}
  D(y)=g_0(\sigma t^3)+g_1(\sigma t^3)y+g_2(\sigma t^3)y^2-g(y).
\end{equation}
The second relation in the commutative component will give
\[
  x-\frac{\gamma(t)}2=\frac{D(y)}{3y^2}.
\]
Combining this with \eqref{eq:Phi1} leads to the polynomial
\begin{equation}\label{eq:Hdef}
  H(y)
  =D(y)^2
   -9y^4\left(
      \frac{\gamma(t)^2}{4}+\frac{t^3-y^3}{3}
    \right)
  \in\R[y].
\end{equation}

Modulo $t$ we have
\[
  D(y)\equiv-g(y)\pmod t,
  \qquad
  \gamma(t)\equiv0\pmod t,
\]
so
\begin{equation}\label{eq:Hmodt}
  H(y)\equiv g(y)^2+3y^7
  =y^6\bigl(a_0^2+O(y)\bigr)
  \pmod t.
\end{equation}
The factor multiplying $y^6$ has nonzero constant term. After multiplying
$H$ by a nonzero constant if necessary, we may assume that $H$ is monic. (A detailed explanation of why we can assume that $H(y)$ is monic is as follows:  
Let $d=\deg g \geq 3$.

If $d\geq 4$, then $D(y)$ has the same leading degree $d$ as $g(y)$, with leading coefficient equal 
 to the negative of the leading coefficient of $g$. Hence $D(y)^{2}$ has degree $2d\geq 8$, while the 
second term in 
\[H(y)=D(y)^{2}-9y^{4} ({\frac {{ \gamma(t)}^{2}}{4}}
 +{\frac { t^{3}-y^{3}}
3}) \]
has degree at most $7$. Therefore the leading coefficient of $H$ is a nonzero complex constant.

 If $d=3$, then the term $+3y^{7}$ coming from the second summand is the highest-degree term, 
so the leading coefficient is $3$.
 Thus in all cases the leading coefficient belongs to ${\C}^{\times}$, and multiplication by a nonzero 
complex constant indeed makes $H$ monic.)

 Since \(R=\mathbb C[[t]]\) is a complete local ring, we may use the factorisation form of
Hensel's lemma; see \cite{Mats}, 
Theorem~8.3.
By Hensel's lemma, there is a monic
factor
\[
  h_1(y)\in\R[y],\qquad \deg h_1=6,
\]
such that
\begin{equation}\label{eq:h1modt}
  h_1(y)\equiv y^6\pmod t
\end{equation}
and $h_1(y)$ divides $H(y)$ in $\R[y]$. (A detailed proof of why $h_{1}$ has degree $6$ is as follows:  
  Since
\[H(y) \equiv  y^{6}U(y) \mod t,
U(0) = a_{0}^{2}\neq  0,\]
we have $\gcd(y ^{6},U) = 1$. After normalising $H$ to be monic, Hensel’s lemma yields monic
polynomials $h_{1},h_{2} \in {\mathcal R}[y]$ such that
\[ H =h_{1}h_{2},\]
\[h_{1} \equiv  y^{6} \mod t,
h_{2}\equiv  U \mod t .\]
Since $h_1$ is monic and reduces to the monic polynomial $y^6$, necessarily
$\deg h_{1} = 6$.)

$ $
   
 Observe that  $h_1(0)\neq0$: indeed $H(0)=g_0(\sigma t^3)^2\neq0$, while the
complementary Hensel factor has a unit constant term by
\eqref{eq:Hmodt}.

$ $
Set
\[
  B_1=\K[z]/(h_1(z)),
  \qquad
  u_1=z+(h_1(z)),
  \qquad
  A_1'=B_1.
\]
Since $h_1(0)\neq0$, $u_1$ is invertible in $B_1$
 (so the expression $u_{1}^{-2}D(u_{1})$ used in the definition of $X_{1}$ is legitimate).
 Define
\[
  Y_1=u_1,
\]
and
\begin{equation}\label{eq:X1def}
  X_1
  =\frac{\gamma(t)}2
   +\frac{1}{3}u_1^{-2}D(u_1).
\end{equation}
Let
\[
  f_1:\R\langle x,y\rangle\longrightarrow A_1'
\]
be the homomorphism given by $f_1(x)=X_1$, $f_1(y)=Y_1$.

Since $h_1$ divides $H$, equation~\eqref{eq:Hdef} gives in $B_1$
\[
  \left(\frac{D(u_1)}{3u_1^2}\right)^2
  =\frac{\gamma(t)^2}{4}+\frac{t^3-u_1^3}{3}.
\]
Using \eqref{eq:X1def}, this is exactly
\[
  X_1^2-\gamma(t)X_1+\frac{u_1^3-t^3}{3}=0,
\]
so $f_1(\Phi_1)=0$.

Since $B_1$ is commutative,
\[
  Y_1^2X_1+Y_1X_1Y_1+X_1Y_1^2=3Y_1^2X_1.
\]
By \eqref{eq:X1def},
\[
  3Y_1^2X_1-\frac32\gamma(t)Y_1^2=D(Y_1).
\]
Using \eqref{eq:Ddef}, we obtain $f_1(\Phi_2)=0$.

Let $L_1=\im(f_1)$. Since $Y_1=u_1$, the generic fibre of $L_1$ is all of
$B_1$, so $L_1$ has rank $6$ over $\R$. It is finite over $\R$: the element
$Y_1$ is integral by $h_1(Y_1)=0$, and $X_1$ is integral over $\R[Y_1]$ by
\eqref{eq:Phi1}. Thus $L_1$ is a finite torsion-free, hence free, $\R$-module of
rank $6$.

Modulo $tL_{1}=t\im(f_1)$, equation~\eqref{eq:h1modt} gives $Y_{1}^6=0$, while
\eqref{eq:Phi1} gives
 \[ X_1^2=-\frac13Y_1^3,
\qquad\text{hence}\qquad X_1^4=\frac19Y_1^6=0.
\] 
Because $L_1/tL_1$ is commutative, the ideal generated by $X_{1}+t\im(f_1)$ and $Y_{1}+t\im(f_1)$ is
nilpotent. Hence $L_1/tL_1$ is local.

\subsection{The full homomorphism and its rank}\label{35}

Put
\[
  A'=A_1'\oplus A_2'\oplus A_3'
\]
and define
\[
  f=(f_1,f_2,f_3):\R\langle x,y\rangle\longrightarrow A'.
\]
 We have $f_{i}(x)=X_{i}$ and $f_{i}(y)=Y_{i}$ for $i=1, 2, 3$. 
Moreover, $f(x)=(X_{1}, X_{2}, X_{3})$, $f(y)=(Y_{1}, Y_{2}, Y_{3})$. 
By construction,
\[
  f(\Phi_1)=f(\Phi_2)=0.
\]

We next verify the rank condition required by  Theorem \ref{New}.
Over $\K$, Lemma~\ref{lem:f3rank} gives
\[
  \K\langle X_3,Y_3\rangle=\Mat_3(\K),
\]
 hence \[\dim_{\K}\K\langle X_3,Y_3\rangle=9.\]
 Lemma~\ref{lem:f2rank} gives
\[
  \K\langle X_2,Y_2\rangle=\Mat_2(B_2).
\]

 Hence \[\dim_{\K}\K\langle X_2,Y_2\rangle=12.\]

The first component is $B_1$, which is commutative and generated by $Y_1$.
Furthermore,
\[
  B_2\cong\K^{\oplus3},
\]
because $z^3-\sigma t^3$ has three distinct roots in $\K$.

 Since $h_{1}(z)$ is monic of degree $6$, $  \dim _{\K}\K\langle X_1,Y_1\rangle=6$. 
The boundedness hypothesis also holds. Indeed, $L_1$ is a finite $\R$-module in
the finite-dimensional $\K$-space $A_1'$, while
\[
  L_2\subseteq\Mat_2(\R[u_2]),
  \qquad
  L_3\subseteq\Mat_3(\R).
\]
Thus, for some $\delta\geq0$,
\[
  \im(f)\subseteq t^{-\delta}M\{t\}.
\]
Theorem~\ref{New} applies with
\[
  (n_1,d_1)=(1,6),\qquad
  (n_2,d_2)=(2,3),\qquad
  (n_3,d_3)=(3,1).
\]
Therefore
\[
  L:=\im(f)
\]
is a free $\R$-module of rank
\[
  1^2\cdot6+2^2\cdot3+3^2\cdot1=27,
\]
and $L/tL$ admits a flat deformation to 
\[
  \C^{\oplus6}\oplus\Mat_2(\C)^{\oplus3}\oplus\Mat_3(\C).
\]

\subsection{Identification of the special fibre}\label{36}

It remains to identify $L/tL$ with the contraction algebra $N$.
The two relations \eqref{eq:Phi1} and \eqref{eq:Phi2} reduce modulo $t$ to
\[
  3x^2+y^3=0,
  \qquad
  y^2x+yxy+xy^2+g(y)=0.
\]
Hence there is a surjective homomorphism
\[
  \C\langle x,y\rangle/I\twoheadrightarrow L/tL.
\]
We now show that the ideal generated by the images of $x$ and $y$ in $L/tL$ is nilpotent; 
this implies that the homomorphism factors through the completion $N$. 

Let $L_i=\im(f_i)$. The natural inclusion
\[
  L\hookrightarrow L_1\oplus L_2\oplus L_3
\]
becomes an isomorphism after tensoring with $\K$, because the generic image is
the full direct product. Hence the cokernel is a finite torsion $\R$-module.
Therefore there exists $c\geq0$ such that
\begin{equation}\label{eq:conductor}
  t^c(L_1\oplus L_2\oplus L_3)\subseteq L.
\end{equation}
For each $i$, the ideal generated by the images of $x,y$ in $L_i/tL_i$ is
nilpotent, as proved above. Choose $e$ large enough that every word of length
$e$ in $x,y$ maps into $tL_i$ for all three $i$.

Any word of length $e(c+1)$ can be divided into $c+1$ consecutive blocks of
length $e$. Its image in each $L_i$ therefore belongs to $t^{c+1}L_i$.
By \eqref{eq:conductor},
\[
  t^{c+1}(L_1\oplus L_2\oplus L_3)\subseteq tL.
\]
Thus every sufficiently long word in the images of $x,y$ belongs to $tL$.
Consequently the ideal generated by $x,y$ in $L/tL$ is nilpotent, so $L/tL$ is
local.

The map from $\C\langle x,y\rangle/I$ to $L/tL$ therefore extends to a
surjection
\[
  N\twoheadrightarrow L/tL.
\]
Both algebras have dimension $27$ over $\C$, hence this surjection is an
isomorphism. We have proved the following.

\begin{theorem}
Let $p(y)\in\C[y]$ satisfy $p(y)\neq0$ and $\ord(p)=4$, and let $N$ be the
completed Jacobi algebra associated with the potential
\[
  x^3+xy^3+p(y).
\]
 Suppose that $N$ has dimension $27$ and that the ideal generated by $x$ and $y$ is nilpotent (this happens for example when $N$ is a contraction algebra by \cite{MW}). 
Then $N$ admits a flat deformation to
\[
  \boxed{
  \C^{\oplus6}\oplus\Mat_2(\C)^{\oplus3}\oplus\Mat_3(\C)}.
\]
\end{theorem}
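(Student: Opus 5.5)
The plan is to assemble the results already established in Section~\ref{Section3}, which together verify the hypotheses of Theorem~\ref{New}, and then to identify the special fibre with $N$.

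\emph{First}, I work with the three components $f_1$, $f_2$, $f_3$ and their sum $f=(f_1,f_2,f_3)$ into $A'=A_1'\oplus A_2'\oplus A_3'$. The constructions of Subsections~\ref{3by3}, the $2\times 2$ subsection and~\ref{34} show that each $f_i$ kills both deformed relations $\Phi_1$ and $\Phi_2$. Lemmas~\ref{lem:f3rank} and~\ref{lem:f2rank}, together with $Y_1=u_1$ generating $B_1$, give
\[
\dim_{\K}\K\im(f_i)=\dim_{\K}A_i' \qquad (i=1,2,3).
\]
The matrix sizes $n_i=1,2,3$ are pairwise distinct. The boundedness hypothesis $\im(f)\subseteq t^{-\delta}M\{t\}$ holds because $L_2$ and $L_3$ have entries in $\R[u_2]$ and $\R$, while $L_1$ is a finite $\R$-module. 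Theorem~\ref{New} therefore applies with $(n_i,d_i)=(1,6),(2,3),(3,1)$. Note that $h_1$ need not be square-free, which is why Theorem~\ref{thm:nonsquarefree} rather than Theorem~\ref{thm:method} underlies it. The conclusion is that $L=\im(f)$ is free of rank $27$ and that $L/tL$ deforms flatly to $\C^{\oplus6}\oplus\Mat_2(\C)^{\oplus3}\oplus\Mat_3(\C)$.

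\emph{Second}, I identify $L/tL$ with $N$. Modulo $t$, the relations $\Phi_1$ and $\Phi_2$ reduce, up to the unit factor $3$ in $\Phi_1$, to the Jacobi relations. This gives a surjection $\C\langle x,y\rangle/I\twoheadrightarrow L/tL$.

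The main obstacle is showing that this surjection factors through the completion $N$, that is, that $x$ and $y$ generate a nilpotent ideal in $L/tL$. I handle it by the conductor argument of Subsection~\ref{36}.
\begin{enumerate}
\item Since $L\hookrightarrow L_1\oplus L_2\oplus L_3$ becomes an isomorphism after tensoring with $\K$, the cokernel is torsion, so some $t^c$ maps $\bigoplus L_i$ into $L$.
\item Each $L_i/tL_i$ is local with nilpotent augmentation ideal: for $i=1$ by commutativity, for $i=2$ by the Diamond-lemma basis, and for $i=3$ by the explicit spanning set. Hence words of some length $e$ land in $tL_i$.
\item Splitting a word of length $e(c+1)$ into $c+1$ blocks of length $e$ puts its image in $t^{c+1}\bigoplus L_i\subseteq tL$.
\end{enumerate}

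Consequently the map factors through $N\cong\C\langle x,y\rangle/(I+\mathfrak m^{r})$ for large $r$, as in Lemma~\ref{nieuzyte}. This gives a surjection $N\twoheadrightarrow L/tL$. By hypothesis $\dim_\C N=27=\dim_\C L/tL$, so the surjection is an isomorphism. Composing it with the flat deformation from the first step proves the theorem.
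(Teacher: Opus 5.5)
Your proposal is correct and follows essentially the same route as the paper. You verify the hypotheses of Theorem~\ref{New} for $f=(f_1,f_2,f_3)$ with $(n_i,d_i)=(1,6),(2,3),(3,1)$, then identify $L/tL$ with $N$ via the surjection from $\C\langle x,y\rangle/I$, the conductor argument for nilpotence, and the dimension count $27$, exactly as in Subsections~\ref{35} and~\ref{36}.
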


\section{ Contraction algebras with $\ord(p)>4$ and $\ord(p)-1$ not divisible by $3$}

\subsection{Hypotheses}
 We recall the method which will be used in this section. 
Let
\[
  f:\R\langle x_1,\dots,x_\rho\rangle\longrightarrow A'
\]
be a homomorphism of unital $\R$-algebras. We impose the following conditions.

\begin{enumerate}[label=\textup{(\arabic*)}]
\item $f(1)=1_{A'}$.
\item There exist $m_1,\dots,m_N\in\C\langle x_1,\dots,x_\rho\rangle$
      such that $f(m_1),\dots,f(m_N)$ are linearly independent over $\R$.
\item There exists $\delta\geq 0$ such that
      \[
        \im(f)\subseteq t^{-\delta}M\{t\}.
      \]
\end{enumerate}

The next statement is the version of the method that we shall use (which is Theorem \ref{thm:nonsquarefree}).  
 
\begin{theorem}\label{11}
Assume that $f$ satisfies the hypotheses above. Set
\[
  N_0= \im(f)/t\im(f).
\]
 There is a flat deformation from $N_0$ to $A$.
\end{theorem}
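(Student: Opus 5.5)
This statement is a restatement of Theorem~\ref{thm:nonsquarefree}, so the plan is to reduce it to that theorem rather than reprove anything. The work is to check that the hypotheses listed just above, in the setting of Definition~\ref{111} with monic $g_i\in\R[z]$ that need not be square-free, are exactly the hypotheses of Theorem~\ref{thm:method} with square-freeness removed. These are: $f(1)=1_{A'}$, the existence of $N$ words $m_1,\dots,m_N$ with $\R$-independent images, and boundedness $\im(f)\subseteq t^{-\delta}M\{t\}$. Once this is checked, Theorem~\ref{thm:nonsquarefree} gives the flat deformation from $N_0=\im(f)/t\im(f)$ to $A$ directly.

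For the reader, I would also recall the skeleton of the argument behind Theorem~\ref{thm:nonsquarefree}.
\begin{enumerate}[label=\textup{(\roman*)}]
\item The construction in Theorem~\ref{thm:method} uses the boundedness and independence hypotheses to produce $q_1,\dots,q_N\in\R\langle x_1,\dots,x_\rho\rangle$ with $\im(f)=\bigoplus_i\R f(q_i)$. Square-freeness is not used here.
\item It follows that $L=\im(f)$ is free of rank $N$, that $L[t^{-1}]=A'$ by comparing $\K$-dimensions, and that $L/tL=N_0$.
\item Next, pass to a Puiseux extension $t=v^m$ over which every $g_i$ splits. The Chinese remainder theorem then identifies the generic fibre with $\bigoplus_i \Mat_{n_i}\bigl(\bigoplus_j \C[\varepsilon]/(\varepsilon^{c_{i,j}})\bigr)\otimes_\C\K'$.
\item Each local factor $\C[\varepsilon]/(\varepsilon^c)$ deforms flatly to $\C^{\oplus c}$ via $\C[[v]][z]/\prod_{j=1}^c(z-jv)$.
\item Finally, transitivity of flat deformations \cite[Theorem~2.20]{JelisiejewSmoktunowicz} concatenates the two deformations, giving $N_0\rightsquigarrow A$.
\end{enumerate}

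The only delicate point is step~(ii). One has to ensure that $\K\im(f)$ is all of $A'$, and not merely a subspace of dimension at least $N$. This holds because $\dim_\K A'=N$ and the $N$ independent images span over $\K$. I would state this explicitly, since it is what makes the generic fibre $A'$ and hence lets the splitting in step~(iii) produce $A$. Everything else is a direct citation of Theorem~\ref{thm:nonsquarefree}, so I expect no real obstacle.
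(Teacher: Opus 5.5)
Your proposal is correct and takes the same approach as the paper: the paper gives no separate proof and simply identifies this statement with Theorem~\ref{thm:nonsquarefree}, whose hypotheses are exactly conditions (1)--(3) listed just before the statement, with square-freeness dropped. Your added remark is also sound: $\R$-independence of the $N$ images implies $\K$-independence (clear denominators by a power of $t$), so $\K\im(f)=A'$ because $\dim_\K A'=N$; this is a harmless elaboration of step (ii) of that theorem's proof.
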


 Let $\ord(p)=n$, so $\ord(g)=n-1$, where $g(y)=p'(y)$. Suppose that $3$ does not divide $n-1$.

Our target algebra is
\[
  A=\C^{\oplus 7}\oplus\Mat_2(\C)^{\oplus n-1}\oplus\Mat_3(\C).
\] 
 We now construct a homomorphism to which Theorem \ref{11} 
applies. Namely, we will construct the three components and then apply Theorem \ref{New} (and hence Theorem \ref{11}).

\subsection{Preliminary notation and the common deformed relations}\label{1}

 Set $m=\deg (g(y))$, $e_{n-1}e_{m}\neq 0$, and write
\[g(y)=\sum_{i=n-1}^{m}e_{i}y^{i}.\]

 Put 
\[k = \lfloor {\frac {n-1}{3}}\rfloor, m'=\lfloor {\frac {m}{3}}\rfloor.\]

Define  \[\gamma(t)= 2(\sum_{i=k}^{m'}e_{3i+2}t^{3i}).\label{eq:gamma}\]
We shall find $f$ which annihilates 
\begin{equation}\label{eq:Phi1drugi}
  \Phi_1
  =x^2-\gamma(t)x+\frac{1}{3}(y^3-t^3)
\end{equation}
and
\begin{equation}\label{eq:Phi2drugi}
\begin{split}
  \Phi_2={}&y^2x+yxy+xy^2+g(y)
  +t^{3m+3}-\frac{3}{2}\gamma(t)y^2.
\end{split}
\end{equation}
Modulo $t$, these are equivalent to the two Jacobi relations. 

Notice that $\Phi _{1}=0$ is equivalent to $\Phi_{3}=0$ where 
\begin{equation}\label{eq:Phi3}
  \Phi_3
  =(x-{\frac {\gamma(t)}2})^{2}-{\frac {\gamma(t)^{2}}4}+\frac{1}{3}(y^3-t^3).
\end{equation}
 Moreover, $\Phi _{2}=0$ is equivalent to $\Phi _{4}=0$ where 
\begin{equation}\label{eq:Phi4}
\begin{split}
  \Phi_4={}&y^2(x-{\frac {\gamma(t)}2})+y(x-{\frac {\gamma(t)}2})y+(x-{\frac {\gamma(t)}2})y^2+g(y)
  +t^{3m+3}.
\end{split}
\end{equation}

\subsection{The $3\times3$ component}
 This subsection is almost identical to  Subsection \ref{3by3}.
 We only take slightly different elements $\alpha (t), \beta (t), \gamma (t)$, but the remaining part of the construction and its proofs are essentially the same.

 Let the notation be as in Section \ref{M1}. 
 Set $e_{j}=0$ for all $j\notin [n-1,m]$.
  Define 

\[ \alpha(t)= -t^{3m}-t^{-3}(\sum_{i=k}^{m'}e_{3i}t^{3i}),\label{eq:alpha}\\\]
 \[ \beta(t)= -t^{-3}(\sum_{i=k}^{m'}e_{3i+1}t^{3i}),\label{eq:beta}\\ \]
  \[\gamma(t)= 2(\sum_{i=k}^{m'}e_{3i+2}t^{3i}).\label{eq:gamma}\]
  
Notice that 
\[
  \gamma\in t^{3}\R,
  \qquad \alpha,\beta\in \R.
\]

Put
\[
  A_3'=\Mat_3(\K).
\]
 Define
\[
  X_3=
  \begin{pmatrix}
    \gamma(t)&\alpha(t)&\beta(t)\\
    0&0&0\\
    0&0&0
  \end{pmatrix},
  \qquad
  Y_3=
  \begin{pmatrix}
    0&1&0\\
    0&0&1\\
    t^3 &0&0
  \end{pmatrix}.
\] 
Let
\[
  f_3:\R\langle x,y\rangle\longrightarrow A_3'
\]
be the unital homomorphism defined by $f_3(x)=X_3$ and $f_3(y)=Y_3$.

We have
\[
  Y_3^3=t^3I_3,
  \qquad
  X_3^2=\gamma(t)X_3.
\]
Hence $f_3(\Phi_1)=0$.

A direct multiplication gives
\begin{equation}\label{eq:f3mixed}
  Y_3^2X_3+X_3Y_3^2+Y_3X_3Y_3
  =t^3\alpha(t)I_3+t^3\beta(t)Y_3+\gamma(t)Y_3^2.
\end{equation}
Since $Y_3^3=t^3I_3$, 
\[
  g(Y_3)=\sum_{i=k}^{m'}(e_{3i}t^{3i} I_3+e_{3i+1}t^{3i}Y_{3}+e_{3i+2}t^{3i}Y_{3}^{2}).
\]
 
Using the definitions of $\alpha (t),\beta (t),\gamma (t)$, the coefficients of
$I_3,Y_3,Y_3^2$ in $f_3(\Phi_2)$ all vanish. Thus
\[
  f_3(\Phi_2)=0.
\]

For later reference, some useful products are
\begin{align*}
X_3Y_3&=
\begin{pmatrix}
 t^3\beta&\gamma&\alpha\\0&0&0\\0&0&0
\end{pmatrix},
&
Y_3X_3&=
\begin{pmatrix}
0&0&0\\0&0&0\\t^3\gamma&t^3\alpha&t^3\beta
\end{pmatrix},\\[2mm]
X_3Y_3^2&=
\begin{pmatrix}
 t^3\alpha&t^3\beta&\gamma\\0&0&0\\0&0&0
\end{pmatrix},
&
Y_3X_3Y_3&=
\begin{pmatrix}
0&0&0\\0&0&0\\t^6\beta&t^3\gamma&t^3\alpha
\end{pmatrix},\\[2mm]
Y_3^2X_3&=
\begin{pmatrix}
0&0&0\\t^3\gamma&t^3\alpha&t^3\beta\\0&0&0
\end{pmatrix}.
\end{align*}
Also
\begin{align*}
  X_3Y_3X_3&=t^3\beta(t)X_3,\\
  X_3Y_3^2X_3&=t^3\alpha(t)X_3,\\
  X_3^3&=\gamma(t)^2X_3.
\end{align*}
Two of the longer products that are useful for checking the original matrix
list are
\[
Y_3^2X_3Y_3X_3=
\begin{pmatrix}
0&0&0\\
 t^6\beta\gamma&t^6\alpha\beta&t^6\beta^2\\
0&0&0
\end{pmatrix}
\]
and
\[
Y_3^2X_3Y_3^2=
\begin{pmatrix}
0&0&0\\
 t^6\alpha&t^6\beta&t^3\gamma\\
0&0&0
\end{pmatrix}.
\]

\begin{lemma}\label{lem:f3rank2}
The $\K$-algebra generated by $X_3$ and $Y_3$ is $\Mat_3(\K)$. Denote $L_{3}=\im (f_{3})$. 
Then the ideal of $L_{3}/tL_{3}$ generated by $X_{3}+tL_{3}$ and $Y_{3}+ tL_{3}$ is nilpotent.
\end{lemma}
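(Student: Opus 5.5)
The plan is to reuse the proof of Lemma~\ref{lem:f3rank} essentially verbatim, changing only the final step that shows the determinant is nonzero. The nine matrices
\[
 I_3,\ X_3,\ Y_3,\ X_3Y_3,\ Y_3X_3,\ Y_3^2,\ X_3Y_3^2,\ Y_3X_3Y_3,\ Y_3X_3Y_3^2
\]
have the same shape as before, with $X_3,Y_3$ of the same form and only $\alpha,\beta,\gamma$ changed. So the block computation there carries over word for word and gives
\[
 \det\mathcal M=t^{12}\bigl(\gamma^3+t^3\alpha^3+t^6\beta^3-3t^3\alpha\beta\gamma\bigr)^2 .
\]
It therefore suffices to show that the bracket is a nonzero element of $\K$. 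The previous argument used that $\alpha$ is a unit, which is no longer true here.

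\textbf{Nonvanishing of the determinant.} First note that $k=\lfloor (n-1)/3\rfloor\geq 1$ because $n-1\geq 4$. Hence $\alpha,\beta,\gamma$ are polynomials in $t$, and I compare their degrees in $t$.
\begin{itemize}
\item The sum $t^{-3}\sum e_{3i}t^{3i}$ has degree at most $3m'-3\leq m-3<3m$. So the term $-t^{3m}$ cannot cancel, and $\deg\alpha=3m$ with leading coefficient $-1$.
\item Similarly $\deg\beta\leq m-3$ and $\deg\gamma\leq m$.
\end{itemize}
The four summands in the bracket then have degrees
\[
 \deg(t^3\alpha^3)=9m+3,\qquad \deg\gamma^3\leq 3m,\qquad \deg(t^6\beta^3)\leq 3m-3,\qquad \deg(t^3\alpha\beta\gamma)\leq 5m .
\]
Since $m\geq 4$, the term $t^3\alpha^3$ strictly dominates. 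So the bracket has leading term $-t^{9m+3}\neq 0$, and $\det\mathcal M\neq0$. The nine matrices are therefore $\K$-linearly independent, hence a basis of $\Mat_3(\K)$, and the algebra they generate is $\Mat_3(\K)$.

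\textbf{Nilpotency.} Since $X_3,Y_3\in\Mat_3(\R)$, we have $L_3\subseteq\Mat_3(\R)$, and $L_3$ is an $\R$-module. I use the exact identities already recorded:
\[
 X_3^2=\gamma X_3,\qquad Y_3^3=t^3I_3,\qquad X_3Y_3X_3=t^3\beta X_3,\qquad X_3Y_3^2X_3=t^3\alpha X_3 .
\]
Because $\gamma\in t^3\R$ and $\alpha,\beta\in\R$, the left-hand sides $X_3^2$, $Y_3^3$, $X_3Y_3X_3$ and $X_3Y_3^2X_3$ all lie in $tL_3$.

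Now take any word in $X_3,Y_3$ containing at least two letters $X_3$. Between two consecutive occurrences of $X_3$ there is a block $Y_3^j$. If $j\leq 2$, that segment is one of $X_3^2$, $X_3Y_3X_3$, $X_3Y_3^2X_3$; if $j\geq3$, the block contains $Y_3^3$. Either way the word lies in $tL_3$, since $tL_3$ is an ideal of $L_3$. A word with at most one $X_3$ and length at least $7$ contains $Y_3^3$, so it also lies in $tL_3$. Hence every word of length at least $7$ maps to zero in $L_3/tL_3$. So the ideal generated by the images of $X_3$ and $Y_3$ is nilpotent; its seventh power vanishes.

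The only real obstacle is the first part, replacing the unit argument for $\alpha$ by the degree comparison above. The key point there is checking that the term $-t^{3m}$ in $\alpha$ dominates every other contribution, which uses $n-1\geq4$ so that $k\geq1$ and $m\geq4$.
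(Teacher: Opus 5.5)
Your proof is correct, and it differs from the paper's in one step: showing that $P(t)=\gamma^3+t^3\alpha^3+t^6\beta^3-3t^3\alpha\beta\gamma$ is nonzero.

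The reduction to $\det\mathcal M=t^{12}P(t)^2$ matches the paper exactly; it depends only on the shape of $X_3,Y_3$. Your nilpotency argument is a self-contained version of the reduction the paper only cites from the $\ord(p)=4$ case. It uses the fact that any word with two letters $X_3$ contains $X_3^2$, $X_3Y_3X_3$, $X_3Y_3^2X_3$ or $Y_3^3$, all of which lie in $tL_3$.

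For $P(t)\neq0$, the paper works $t$-adically from the bottom, using the lowest coefficient $e_{n-1}\neq0$ of $g$. It splits into two cases:
\begin{itemize}
\item[(i)] when $n-1=3k+1$, the term $t^6\beta^3$ supplies the lowest-order term $-e_{n-1}^3t^{9k-3}$;
\item[(ii)] when $n-1=3k+2$, the term $\gamma^3$ supplies $8e_{n-1}^3t^{9k}$.
\end{itemize}
You instead observe that $k\geq1$ makes $\alpha,\beta,\gamma$ genuine polynomials in $t$. You then read off the top-degree term $-t^{9m+3}$, which comes from the perturbation $-t^{3m}$ in $\alpha$.

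What your route buys: it is shorter, needs no case split, and never uses $3\nmid n-1$ or the low-order structure of $g$. It would go through verbatim for the analogous $\alpha$ even when $3\mid n-1$. The cost is that it depends on the particular choice $T=t^{3m+3}$, whose degree exceeds every other contribution, and on all entries being polynomial rather than power series in $t$.

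What the paper's route buys: it ties nonvanishing to the intrinsic datum $e_{n-1}$ and gives the exact $t$-adic order of $\det\mathcal M$. That is why it needs the case distinction according to $n-1$ modulo $3$.
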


\begin{proof} The proof is the same as the proof of Lemma \ref{lem:f3rank}. 
The fact that $\alpha (t), \beta (t)\in  {\mathbb C}[[t]]$ does not influence the proof. Indeed, recall that the determinant of the matrix  $M$ used in the proof of Lemma \ref{lem:f3rank} is 
\begin{equation}\label{eq:f3detexactnew}
 \det\mathcal M
 =t^{12}
 \left(
   \gamma(t)^3+t^3\alpha(t)^3+t^6\beta(t)^3
   -3t^3\alpha(t)\beta(t)\gamma(t)
 \right)^2=t^{12}P(t)^{2}.
\end{equation} Put $r=n-1$.
\begin{itemize}
\item If $r=3k+1$, then $e_r\ne0$ and
\[
 \beta(t)=-e_r t^{3k-3}+O(t^{3k}),\qquad
 \alpha(t)=O(t^{3k}),\qquad \gamma(t)=O(t^{3k}).
\]
Hence
\[
 P(t)=-e_r^3t^{9k-3}+O(t^{9k}),
\]
so $P(t)\ne0$.
\item If $r=3k+2$, then
\[
 \gamma(t)=2e_r t^{3k}+O(t^{3k+3}),\qquad
 \alpha(t),\beta(t)=O(t^{3k}),
\]
and therefore
\[
 P(t)=8e_r^3t^{9k}+O(t^{9k+3})\ne0.
\]
\end{itemize}
  Hence  the determinant of $M$ is nonzero. 
 Recall that  $L_3=\im (f_3)$. The $K$-algebra generated by $X_3$ and $Y_3$ is $M_3(K)$.
    Moreover, the ideal of  $L_3/tL_3$ generated by $X_3+tL_3$ and $Y_3+tL_3$ 
    is nilpotent (by the same argument as at the end of Section \ref{Section3}). 
 Hence $L_3/tL_3$ is local.
\end{proof}

\subsection{The $2\times2$ component}\label{dwa}

 Let $\omega \in {\mathbb C}$ be such that $\omega \neq 1$ and ${\omega }^{3}=1$. 
 
Set
\[
T=t^{3m+3},
\qquad
Q_2(z)=g(z)+T+\omega\bigl(g(\omega z)+T\bigr).
\]

 Observe that $z^{n-1}$ divides $Q_{2}(z)$ modulo $t$, and $z^{n}$ does not divide $Q_{2}(z)$ modulo $t$.   We have
\[
 Q_2(z)\bmod t=z^{n-1}U(z),\qquad U(0)\ne0,
\]
after multiplying $Q_2$ by a nonzero complex scalar if necessary to make it monic.  Since $z^{n-1}$ and $U(z)$ are coprime, Hensel gives a monic polynomial $q_{2}(z)\in  {\mathbb C}[[t]][z]$ of 
degree $n-1$ (the same as the order of $g(y)$) such that $q_{2}(z)$ divides $Q_{2}(z)$ in ${\mathbb C}[[t]][z]$ and 
\[q_{2}(z)=z^{n-1}+\sum_{i=0}^{n-2}c_{i}z^{i}\] and $c_{i}\in t{\mathbb C}[[t]]$. 
Let
\[
  B_2=\K[z]/(q_2(z)),
  \qquad
  u_2=z+(q_2(z)),
\]
and put

\[
  A_2'=\Mat_2(B_2).
\]
Since  \(q_2\mid Q_2\) and \(u=u_2\), in \(B_2\) we have
\[
g(u)+T+\omega\bigl(g(\omega u)+T\bigr)=0.
\]

Let
\[
Y_2=
\begin{pmatrix}
u&0\\
0&\omega u
\end{pmatrix}.
\]
 Observe that  $Q_{2}(0) = (1+\omega )T \neq  0$. Since $q_{2} | Q_{2}$, $q_{2}(0) \neq  0$ and $u$ is an invertible element in $B_{2}$.
 Denote
\[
\boxed{
d=-\,\frac{g(u)+T}{3u^2}
=
\frac{g(\omega u)+T}{3\omega^2u^2}.
}
\]

 From $Q_{2}(u)=0$,
\[T=-{\frac 1{1+\omega }} \sum_{i=n-1}^{m}e_{i}(1+\omega ^{i+1})u_{i}.\]
Substituting into the formula for $d$ gives 
\[d=-{\frac {\omega }{3(1+\omega )}}\sum_{i=n-1}^{m}e_{i}(1-\omega ^{i})u^{i-2}\in u^{n-3}R[u].\]
Thus
\[
g(u)+T=-3du^2,
\qquad
g(\omega u)+T=3d\omega^2u^2.
\]

Define
\[
c=
\frac{\gamma(t)^2}{4}
-d^2
+\frac{t^3-u^3}{3},
\]
and
\[
X_2=
\begin{pmatrix}
\frac{\gamma}{2}+d&1\\
c&\frac{\gamma}{2}-d
\end{pmatrix}.
\]

Explicitly define $f_{2} : \R\langle x, y\rangle \rightarrow   M_{2}(B_{2})$ by $x\rightarrow X_{2}$, $y \rightarrow Y_{2}$.

Put
\[
Q=X_2-\frac{\gamma}{2}I
=
\begin{pmatrix}
d&1\\
c&-d
\end{pmatrix}.
\]
Then
\[
Q^2=(d^2+c)I
=
\left(
\frac{\gamma^2}{4}
+
\frac{t^3-u^3}{3}
\right)I.
\]
Therefore
\[
Q^2-\frac{\gamma^2}{4}I
+
\frac13\bigl(Y_2^3-t^3I\bigr)
=0,
\]
which is equivalent to \(f_2(\Phi_1)=0\).

For the second relation,
\[
Y_2^2Q+Y_2QY_2+QY_2^2
\]
has diagonal part
\[
\operatorname{diag}(3du^2,-3d\omega^2u^2),
\]
and the off-diagonal part vanishes because 
\[
1+\omega+\omega^2=0.
\]
Since
\[
g(Y_2)+TI_2
=
\begin{pmatrix}
g(u)+T&0\\
0&g(\omega u)+T
\end{pmatrix}
=
\begin{pmatrix}
-3du^2&0\\
0&3d\omega^2u^2
\end{pmatrix},
\]
the diagonal terms cancel, and hence
\[
f_2(\Phi_4)=0.
\] This implies $f_{2}(\Phi _{2})=0$.
\begin{lemma}\label{62}
 The element  $c$ is not a zero divisor in $B_{2}$. 
\end{lemma}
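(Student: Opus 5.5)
The plan is to show that $c$ is in fact a unit of $B_2$, using the structure of $B_2$ over a splitting field. $B_2=\K[z]/(q_2(z))$ is a finite-dimensional commutative $\K$-algebra. An element of it is a non-zero-divisor if and only if it is a unit. It is a unit if and only if its image is nonzero in every local factor of $B_2\otimes_{\K}\K'$, where $\K'=\C((v))$, $t=v^{m_0}$, is a Puiseux extension over which $q_2$ splits. This is the same extension used in the proof of Theorem~\ref{thm:nonsquarefree}.

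Concretely, write $q_2(z)=\prod_j(z-a_j)^{c_j}$ with distinct $a_j\in\K'$. By the Chinese remainder theorem $B_2\otimes_\K\K'\cong\bigoplus_j \K'[\varepsilon]/(\varepsilon^{c_j})$, with $u\mapsto a_j+\varepsilon$. In a local ring $\K'[\varepsilon]/(\varepsilon^{c_j})$, an element is a unit exactly when its residue is nonzero. Since $c$ is a polynomial expression in $u$ with coefficients in $\R$ (recall $d\in u^{n-3}\R[u]$), its residue in the $j$-th factor is obtained by substituting $u=a_j$. So it suffices to prove
\[
c(a_j)=\frac{\gamma(t)^2}{4}-d(a_j)^2+\frac{t^3-a_j^3}{3}\neq0\quad\text{for every root }a_j\text{ of }q_2.
\]

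The key step is to control the $t$-adic valuation of the roots $a_j$; I expect this to be the main point. Expanding,
\[
Q_2(z)=\sum_{i=n-1}^m e_i(1+\omega^{i+1})z^i+(1+\omega)T .
\]
Since $3\nmid n-1$, we have $\omega^{n}\ne \omega$ and in any case $1+\omega^{n}\neq0$, because $\omega^n$ is a cube root of unity and so cannot equal $-1$. Hence the coefficient of $z^{n-1}$ is a nonzero complex number. All coefficients of $z^i$ with $i\ge n-1$ are complex constants, there are no terms with $0<i<n-1$, and the constant term $(1+\omega)T=(1+\omega)t^{3m+3}$ has valuation $3m+3$. The Newton polygon of $Q_2$ therefore has a single segment of slope $(3m+3)/(n-1)$ over $[0,n-1]$. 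The $n-1$ roots of $Q_2$ of positive valuation are exactly the roots of the Hensel factor $q_2$, since $q_2\equiv z^{n-1}\pmod t$. Consequently every $a_j$ satisfies
\[
v_t(a_j)=\frac{3m+3}{n-1}\ \ge\ 3+\frac{6}{n-1}>3,
\]
using $m\ge n-1$.

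With this, the valuations of the terms of $c(a_j)$ are as follows. The term $t^3/3$ has valuation exactly $3$. The term $\gamma^2/4$ has valuation at least $6$, since $\gamma\in t^3\R$. The term $a_j^3/3$ has valuation greater than $9$. Finally $d(a_j)\in a_j^{n-3}\R[a_j]$ has valuation at least $(n-3)v_t(a_j)>3$, because $n\ge5$, so $d(a_j)^2$ has valuation greater than $6$. Thus $c(a_j)=\tfrac13t^3+(\text{terms of higher valuation})\neq0$ for every $j$. Hence $c$ is a unit in each local factor, so $c$ is a unit in $B_2\otimes_\K\K'$ and therefore in $B_2$ itself (by faithful flatness of $\K'$ over $\K$). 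In particular $c$ is not a zero divisor in $B_2$.
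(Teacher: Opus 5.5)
Your proof is correct and is essentially the paper's argument. You pass to a Puiseux field over which $q_2$ splits, use the lowest-order cancellation in $Q_2(u')=0$ to get $(n-1)v_t(u')=3m+3$ for every root $u'$ of $q_2$, and check that the $t^3$ term dominates $c$ at each root. Your local-factor (Chinese remainder) reduction makes explicit a step the paper asserts without comment, which matters because $q_2$ need not be square-free.

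One harmless slip: $m\ge n-1$ only gives $v_t(a_j)\ge 3+3/(n-1)$, not $3+6/(n-1)$; equality holds when $m=n-1$, e.g.\ $p=y^n$. You only use $v_t(a_j)>3$, so nothing breaks.
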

\begin{proof}  Take the Puiseux extension ${\mathbb C}((v))$ with $v^{e}=t$ in which $q_{2}(z)$ splits into linear factors.
 Let $u'$ be a root of   
 $q_{2}(z)$. Since $q_{2}(z)$ divides $Q_{2}(z)$, $u'$ is also a root of $Q_{2}(y)$. We can write
 \[ u'=\sum_{i=i_{0} }^{\infty} \xi_{i}v^{i},\]
where $\xi _{i}\in {\mathbb C}$, $\xi _{i_{0}}\neq 0$.  Since $q_{2}(u')=0$ and  $q_{2}(z)$ is monic, $i_{0}>0$ ( because $q_{2}(z)-z^{n-1}\in t{\mathbb C}[[t]]$). 
 
Observe that  $Q_{2}(u')=0$. Since  $1+\omega^n\neq 0$ because $\omega^n\in\{1,\omega,\omega^2\}$ is never $-1$, the lowest power of $y$ in $Q_{2}(y)$ is $n-1$ (since $e_{n-1}\neq 0$).
 By examining the lowest power of $v$ which appears in summands of  $Q_{2}(u')$ we obtain   
 $(1+\omega )t^{3m+3}+e_{n-1}(1+\omega^{n})\xi _{i_{0}}^{n-1}v^{i_{0}(n-1)}= (1+\omega )v^{e(3m+3)}+e_{n-1}(1+\omega^{n})\xi _{i_{0}}^{n-1}v^{i_{0}(n-1)}$ (recall that $t=v^{e}$).  Therefore cancellation of the lowest terms forces $(n-1)i_0=(3m+3)e$.

 Recall that \[
c=
\frac{\gamma(t)^2}{4}
-d^2
+\frac{t^3-u^3}{3}=\frac{\gamma(t)^2}{4}
-(-\,\frac{g(u)+T}{3u^2})^2
+\frac{t^3-u^3}{3}, 
\]
 
Suppose on the contrary that  $c$ is a zero divisor in $B_{2}$.  Then  there is a root $u'$ of $q_{2}(z)$ in ${\mathbb C}((v))$ such that 
  \[
0=\frac{\gamma(t)^2}{4}
-(-\,\frac{g(u')+T}{3u'^2})^2
+\frac{t^3-u'^3}{3},
\]

 Notice that $(u')^{2}\neq 0$, hence $(u_{2}')^{2}$ is invertible in the field ${\mathbb C}((v))$.    

 Recall that $\gamma (t)\in t^{3}\mathbb C[[t]]$, hence  $\gamma (t)^{2}\in t^{6}\mathbb C[[t]]$. 
 By multiplying the above equation by $9u'^{4}$ we obtain the equation:
  \[
0={9u'^4}\frac{\gamma(t)^2}{4}
-(\,{g(u')+T})^2
+{9u'^4}\frac{t^3-u'^3}{3},
\]

 By the assumptions of this section, $g(z)$ has order $n-1$, so it has a nonzero coefficient at $z^{n-1}$. Recall also that $T=t^{3m+3}$ and $m\geq n-1$. By  substituting $u'=\sum_{i=i_{0} }^{\infty} \xi_{i}v^{i}$ in this equation by examining the smallest power of $v$ appearing as a nonzero summand in ${g(u')+T}$
  we get that it is at least 
\[min((n-1)i_{0},  (3m+3)e).\]
 By the above, $min((n-1)\cdot i_{0},  (3m+3)e)=(n-1)i_{0}=(3m+3)e$. 
So the lowest power of $v$ in a nonzero summand  appearing at $({g(u')+T})^{2}$
 is  at least \[2(n-1)i_{0}=(6m+6)e.\]

 On the other hand the smallest power of $v$ appearing in the summand $9u'^{4}({\frac{\gamma(t)^2}{4}}+{\frac{t^3-u'^3}{3}})$ 
 is (since ${\gamma(t)^2}\in t^{6}{\mathbb C}[[t]]\subseteq t^{6e }{\mathbb C}[[v]]$) coming from $9u'^{4}{\frac{t^3-u'^3}{3}}$ and is \[min(3e+4i_{0}, 3i_{0}+4i_{0})\leq 7i_{0}.\]  
This is less than $2(n-1)i_{0}$ since $n-1\geq 4$. 
 Observe also that $3e+4i_{0}\leq 7i_{0}$ (since $n-1\leq m$). Therefore, the lowest power of $v$ appearing in the formula
\[{9u'^4}\frac{\gamma(t)^2}{4}
-(\,{g(u')+T})^2
+{9u'^4}\frac{t^3-u'^3}{3}\] is 
$3e+4i_{0}$ and is coming from $9u'^{4}t^{3}$.

 We see that the coefficient at the smallest power of $v$ occurring is nonzero. Therefore, 
$0\neq \frac{\gamma(t)^2}{4}
-(-\,\frac{g(u')+T}{3u'^2})^2
+\frac{t^3-u'^3}{3},$ a contradiction. Consequently, 
  $c$ is not a zero divisor in the finite-dimensional $\K$-algebra $B_{2}$. 
  \end{proof}

\begin{lemma}\label{lem:f2ranknew}
The $\K$-algebra generated by $X_2$ and $Y_2$ is $A_2'=\Mat_2(B_2)$.
\end{lemma}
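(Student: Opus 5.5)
The plan is to imitate the proof of Lemma~\ref{lem:f2rank}, using Lemma~\ref{62} in place of the fact that $c(t)\neq0$. Let $S$ denote the $\K$-subalgebra of $\Mat_2(B_2)$ generated by $X_2$ and $Y_2$. It contains $Q=X_2-\frac{\gamma}{2}I$.

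\emph{Step 1: scalars from $B_2$.} I will first show that $S$ contains $\operatorname{diag}(b,b)$ for every $b\in B_2$. The element
\[
Y_2^3=\operatorname{diag}(u^3,u^3)
\]
lies in $S$. However, $u^3$ need not generate $B_2$, so I would rather first separate the two diagonal entries and then use $Q$.

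\emph{Step 2: diagonal idempotents.} The diagonal matrices $Y_2$ and $Y_2^2$ lie in $S$. A suitable combination gives
\[
\operatorname{diag}(u,\omega u)\cdot\operatorname{diag}(u,\omega u)^2=\operatorname{diag}(u^3,u^3).
\]
Since $u$ is invertible in $B_2$, the element $u^3$ is a unit of the finite-dimensional algebra $B_2$. Hence its inverse is a polynomial in $u^3$ over $\K$, so $\operatorname{diag}(u^{-3},u^{-3})\in S$.

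Then $Y_2^3\cdot(Y_2^3)^{-1}Y_2=Y_2$ does not help directly. Instead I use
\[
W=Y_2^2\,(Y_2^{3})^{-1}\,Y_2^{-0}=\operatorname{diag}(u^{-1},\omega^2u^{-1}),
\]
which lies in $S$. Multiplying by $Y_2$ gives $\operatorname{diag}(1,\omega^3)=I$, which is useless. Likewise $Y_2^{-1}=\operatorname{diag}(u^{-1},\omega^2u^{-1})$, which is fine. The matrices $Y_2^{j}(Y_2^3)^{-\lfloor j/3\rfloor}$ are $\operatorname{diag}(u^j,\omega^ju^j)$, and these never separate the entries with a common factor.

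So I follow the earlier proof instead. Since $\det Q=-(d^2+c)$ is a unit (it equals $-(\frac{\gamma^2}{4}+\frac{t^3-u^3}{3})$ up to sign), I conjugate and compute
\[
QY_2Q=\begin{pmatrix} d^2u+c\omega u & du-\omega du\\ cdu-\omega cdu & cu+\omega d^2u\end{pmatrix}.
\]
Combining this with $Y_2Q$ and $QY_2$, I form $E=QY_2-\omega Y_2Q$, whose $(1,2)$ entry vanishes and whose $(2,1)$ entry is $cu(\omega-\omega^2)$. Similarly $QY_2-Y_2Q$ has zero diagonal and off-diagonal entries $(\omega-1)u$ and $(1-\omega)cu$.

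Multiplying $E$ on the left and right by diagonal elements of $S$, and using $Y_2^{-1}$, I expect to isolate a matrix of the form $\begin{pmatrix}0&0\\ c\,b&0\end{pmatrix}$ together with matrices $\begin{pmatrix}0&b'\\0&0\end{pmatrix}$. Their products give $\operatorname{diag}(cb b',0)$ and $\operatorname{diag}(0,cbb')$. This separates the diagonal, and $c$ only appears as a factor.

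\emph{Step 3: conclude using $c$.} By Lemma~\ref{62}, $c$ is not a zero divisor in the finite-dimensional algebra $B_2$, so $c$ is a unit. Hence $c^{-1}$ is a polynomial in $c$ over $\K$. The obstacle is that $c$ and $u$ must be available as scalars in both corners. I plan to handle this by showing that $S\cap\{\text{diagonal matrices}\}$ is a $\K$-algebra containing $\operatorname{diag}(u,\omega u)$, $\operatorname{diag}(\omega u,u)$ (obtained via $Q Y_2 Q^{-1}$ adjusted by the products above), $\operatorname{diag}(c,0)$ and $\operatorname{diag}(0,c)$.

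Since $\omega\neq1$, linear combinations of $\operatorname{diag}(u,\omega u)$ and $\operatorname{diag}(\omega u,u)$ give $\operatorname{diag}(u,0)$ and $\operatorname{diag}(0,u)$. Since $u$ is a unit, all of $\operatorname{diag}(B_2,0)$ and $\operatorname{diag}(0,B_2)$ are then reached, exactly as in Lemma~\ref{lem:f2rank}. Multiplying by $Q$ gives $\begin{pmatrix}0&1\\0&0\end{pmatrix}$ and $\begin{pmatrix}0&0\\c&0\end{pmatrix}$, and invertibility of $c$ then yields all matrix units with coefficients in $B_2$.

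The main obstacle is producing the swapped diagonal matrix. Here I would try $Q^{-1}=(d^2+c)^{-1}Q$, which lies in $S$ because $d^2+c$ is a polynomial in $u^3$ over $\K$ and is a unit. A direct computation of $QY_2Q^{-1}$ in terms of $d$ and $c$ should then show that, after subtracting a multiple of $Y_2$ and dividing by the unit $c$, one obtains $\operatorname{diag}(\omega u,u)$ modulo off-diagonal terms that are already handled.
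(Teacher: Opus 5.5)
\paragraph{There is a genuine gap: the diagonal corners are never separated.}
The content of this lemma is separating the two diagonal corners, i.e.\ getting $B_2E_{11}$ and $B_2E_{22}$ into $S$. Your proposal never actually does this. The isolation of the corners of $E$ is only announced (``I expect to isolate''). The route you then commit to in Step~3, extracting the swapped matrix $\operatorname{diag}(\omega u,u)$ from $QY_2Q^{-1}$, fails as described. A direct computation gives
\[
QY_2Q=(d^2+c)\operatorname{diag}(\omega u,u)+(1-\omega)\,du\,Q,
\qquad\text{so}\qquad
QY_2Q^{-1}=\operatorname{diag}(\omega u,u)+(1-\omega)(d^2+c)^{-1}du\,Q .
\]
The correction term is a $B_2$-multiple of $Q$ whose coefficient involves $d$. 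It has the diagonal part $(1-\omega)(d^2+c)^{-1}u\operatorname{diag}(d^2,-d^2)$ as well as off-diagonal entries. So it is not removed by subtracting a multiple of $Y_2$, nor can it be set aside as ``off-diagonal terms already handled''. Cancelling it amounts to knowing $dI\in S$, which you never establish; the only scalar matrices you produce are polynomials in $Y_2^3$. Your diagonal subalgebra in Step~3 rests on this swapped matrix, and on $\operatorname{diag}(c,0)$ and $\operatorname{diag}(0,c)$, which depend on the unexecuted isolation. So the argument does not close. Two minor points: the $(2,1)$ entry of $E$ is $(1-\omega^2)cu$, and the invertibility of $d^2+c$ needs a valuation check that you omit.

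\paragraph{How to remove $d$.}
What is missing is a concrete way to get rid of $d$. The paper uses the second deformed relation. By the definition of $d$,
\[
\operatorname{diag}(du^2,-d\omega^2u^2)=-\tfrac{1}{3}\bigl(g(Y_2)+TI\bigr),
\]
so $LY_2^2\in\K[Y_2]$ for $L=\operatorname{diag}(\tfrac{\gamma}{2}+d,\tfrac{\gamma}{2}-d)$. Hence $\tilde Q=Y_2^2(X_2-L)$, which has zero diagonal and off-diagonal entries $u^2$ and $\omega^2cu^2$, lies in $S$. Then $\tilde QY_2\tilde Q=cu^5\operatorname{diag}(1,\omega^2)$ and $\tilde Q^2Y_2=cu^5\operatorname{diag}(\omega^2,1)$ separate the diagonal, and Lemma~\ref{62} together with the invertibility of $u$ finishes.

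Alternatively, your own commutator already suffices, needing neither the relation nor $Q^{-1}$. For the off-diagonal matrix $P=QY_2-Y_2Q$ one checks
\[
\omega PY_2-Y_2P=3u^2E_{12},\qquad PY_2-\omega Y_2P=3cu^2E_{21}.
\]
Their two products are $9cu^4E_{11}$ and $9cu^4E_{22}$. Right multiplication by powers of $Y_2$ then gives all of $B_2E_{11}$ and $B_2E_{22}$, because $c$ and $u$ are units of $B_2$. The off-diagonal matrix units follow by multiplying with $u^2E_{12}$ and $cu^2E_{21}$. So the ``scalars in both corners'' obstacle and the swapped diagonal matrix were never needed. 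What the proposal lacks is carrying out this isolation explicitly, with unit coefficients.
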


\begin{proof} Define \[
  \mathcal A:=K\langle X_2,Y_2\rangle\subseteq M_2(B_2).
\]
Observe first that by the second deformed Jacobi relation (by looking at the diagonal entries in this relation) $LY_{2}^{2}\in \K\langle Y_{2}\rangle $ where:
\[
  L   =\begin{pmatrix}{\frac {\gamma}2}+d&0\\0&{\frac {\gamma}2}-d\end{pmatrix}.
\]
 
Hence
\[
{\tilde  Q}=Y_{2}^{2}(X_2-L)\in\mathcal A.
\]
Consequently
\[
 {\tilde Q}=
 \begin{pmatrix}
 0&u^2\\
 \omega^2 c u^2&0
 \end{pmatrix}.
\]
A direct multiplication gives
\[
{\tilde  Q}Y_2{\tilde Q}=c u^5
 \begin{pmatrix}1&0\\0&\omega^2\end{pmatrix},
 \qquad
 {\tilde Q}^2Y_2=c u^5
 \begin{pmatrix}\omega^2&0\\0&1\end{pmatrix}.\]

 Since
$\omega^2\ne1$, their diagonal patterns are linearly independent.  Hence the $K$-linear
combinations  of  $({\tilde  Q}Y_2{\tilde Q})Y_{2}^{j},   ({\tilde Q}^2Y_2) Y_{2}^{j}$  for $0\le j<n-1$ gives 
\[
   cu^{5+j}E_{11},\qquad cu^{5+j}E_{22}.
\]  Multiplication by $c$ is injective by
Lemma \ref{62}, and $u$ is a unit, so these elements generate a $2(n-1)$-dimensional diagonal
subspace, which must be the full diagonal subspace of $M_2(B_2)$.  Since all diagonal matrices
belong to the generated algebra, $L$ belongs to it, hence $X_2-L$ belongs to it; multiplying by
matrix units produces the full subspace of off-diagonal matrices.  

\end{proof} 

We will now show that the ideal generated by the images of $X_{2}$ and $Y_{2}$ in $\im(f_{2})/t(\im(f_{2}))$ is nilpotent. Let $L_2=\im(f_2)$.  
Since $X_2,Y_2$ lie in
$\Mat_2(\R[u_2])$, $L_2$ is a finite torsion-free $\R$-module, and
Lemma~\ref{lem:f2ranknew} shows that it has rank $4\cdot(n-1)$. 
 Recall that 
 $f_{2}(y^2x+yxy+xy^2+g(y))\in t\im (f_{2})$. 
 Hence, $X_{2}Y_{2}^{2}+Y_{2}X_{2}Y_{2}+Y_{2}^{2}X_{2}+g(Y_{2})\in t\im (f_{2})$.
 Therefore, modulo $t\im (f_{2})$, we have the relations
\begin{equation}
  3X_{2}^2+Y_{2}^3=0,
  \qquad
Y_{2}^{3(n-1)}=0, 
\qquad
   Y_{2}^2X_{2}+Y_{2}X_{2}Y_{2}+X_{2}Y_{2}^2+g(Y_{2})=0.
\end{equation}
  To see that $Y_{2}^{3(n-1)}=0$ consider the following  an annihilating polynomial for $Y_2$.  If $r=n-1$, then
\[
 P(z):=q_2(z)q_2(\omega z)q_2(\omega^2z)
\]
annihilates $Y_2=\operatorname{diag}(u,\omega u)$ (up to a harmless nonzero scalar normalization).  Since $q_2(z)\equiv z^r\pmod t$,
\[
 P(z)\equiv c_0z^{3r}\pmod t,\qquad c_0\in\C^\times.
\]
Thus $P(Y_2)=0$ gives $Y_2^{3r}\in tL_2$, which is exactly the required relation in $L_2/tL_2$.

Consider
\[
C=
\C\langle x,y\rangle/
(3x^2+y^3,\,
y^{3(n-1)},\,
y^2x+yxy+xy^2+g(y)
).
\]
This should be regarded as a $\C$-algebra.
 Assign weight $m+2$ to $x$ and weight $1$ to $y$, and assign also ordering $x>y$ (in the degree-lexicographic monomial order).
 
The defining relations give
\[
x^2=-{\frac13}y^3,
\qquad
xy^2=-y^2x-yxy-g(y).
\]
Reducing \(x^2y^2\) in two different ways gives
\[
\boxed{xyxy=yxyx+g(y)x-xg(y).}
\]

Thus one may use the reductions
\[
x^2\longrightarrow-\frac13y^3,
\]
\[
xy^2\longrightarrow-y^2x-yxy-g(y),
\]
\[
xyxy\longrightarrow  yxyx+g(y)x-xg(y),
\]
\[
y^{3(n-1)}\longrightarrow 0.
\]
Recall that we took the weighted degree-lexicographic order with
\[
\deg x=m+2,\qquad \deg y=1,\qquad x>y.
\]

Every irreducible word has one of the forms
\[
y^a,\qquad
y^ax,\qquad
y^axy,\qquad
y^axyx,
\qquad
0\le a<3(n-1).
\]
Hence \(C\) is finite-dimensional.

 Since $y^{3(n-1)}=0$, the reductions  do not decrease ordinary word length, and after performing the reduction we obtain words of length larger than $3(n-1)+3$ which do not appear in the above spanning set of irreducible words, hence all sufficiently long words are zero. Consequently, the ideal
 $(x,y)$
is nilpotent. Therefore \(C\) is local.
\subsection{The commutative degree-seven component}
 This subsection is very similar to Subsection \ref{34}, except that the reduction of  $H(y)$ modulo $t$ has now order $7$ instead of $6$.  As in the previous subsections we assume that $n> 4$ and $3$ does not divide $n-1$. We repeat the argument. 
Define
\begin{equation}\label{eq:Ddef2}
  D(y)=-t^{3m+3}-g(y).
\end{equation}
The second relation in the commutative component will give
\[
  x-\frac{\gamma(t)}2=\frac{D(y)}{3y^2}.
\]
Combining this with \eqref{eq:Phi1drugi} leads to the polynomial
\begin{equation}\label{eq:Hdef2}
  H(y)
  =D(y)^2
   -9y^4\left(
      \frac{\gamma(t)^2}{4}+\frac{t^3-y^3}{3}
    \right)
  \in\R[y].
\end{equation}

Modulo $t$ we have
\[
  D(y)\equiv-g(y)\pmod t,
  \qquad
  \gamma(t)\equiv0\pmod t,
\]
so
\begin{equation}\label{eq:HmodtNew}
  H(y)\equiv g(y)^2+3y^7
  =y^7\bigl(3+O(y)\bigr)
  \pmod t.
\end{equation}
The factor multiplying $y^7$ has nonzero constant term (which is $3$). After multiplying
$H$ by a nonzero constant if necessary, we may assume that $H$ is monic.
By Hensel's lemma, there is a monic 
factor
\[
  h_1(y)\in\R[y],\qquad \deg h_1=7,
\]
such that
\begin{equation}\label{eq:h1modt2}
  h_1(y)\equiv y^7\pmod t
\end{equation}
and $h_1(y)$ divides $H(y)$ in $\R[y]$.
Moreover, $h_1(0)\neq0$ (since $H(0)\neq0$ and $H(y)=h_1(y)h_2(y)$): indeed $H(0)=t^{6m+6}\neq 0$, while the
complementary Hensel factor has a unit constant term by
\eqref{eq:HmodtNew}.

Set
\[
  B_1=\K[z]/(h_1(z)),
  \qquad
  u_1=z+(h_1(z)),
  \qquad
  A_1'=B_1.
\]
Since $h_1(0)\neq0$, $u_1$ is invertible in $B_1$. Define
\[
  Y_1=u_1,
\]
and
\begin{equation}\label{eq:X1def2}
  X_1
  =\frac{\gamma(t)}2
   +\frac{1}{3}u_1^{-2}D(u_1).
\end{equation}
Let
\[
  f_1:\R\langle x,y\rangle\longrightarrow A_1'
\]
be the homomorphism given by $f_1(x)=X_1$, $f_1(y)=Y_1$.

Since $h_1$ divides $H$, equation~\eqref{eq:Hdef2} gives in $B_1$
\[
  \left(\frac{D(u_1)}{3u_1^2}\right)^2
  =\frac{\gamma(t)^2}{4}+\frac{t^3-u_1^3}{3}.
\]
Using \eqref{eq:X1def2}, this is exactly
\[
  X_1^2-\gamma(t)X_1+\frac{u_1^3-t^3}{3}=0,
\]
so $f_1(\Phi_1)=0$.

Since $B_1$ is commutative,
\[
  Y_1^2X_1+Y_1X_1Y_1+X_1Y_1^2=3Y_1^2X_1.
\]
By \eqref{eq:X1def2},
\[
  3Y_1^2X_1-\frac32\gamma(t)Y_1^2=D(Y_1).
\]
Using \eqref{eq:Ddef2}, we obtain $f_1(\Phi_2)=0$.

Let $L_1=\im(f_1)$. Since $Y_1=u_1$, the generic fibre of $L_1$ is all of
$B_1$, so $L_1$ has rank $7$ over $\R$. It is finite over $\R$: the element
$Y_1$ is integral by $h_1(Y_1)=0$, and $X_1$ is integral over $\R[Y_1]$ by
\eqref{eq:Phi1drugi}. Thus $L_1$ is a finite torsion-free, hence free, $\R$-module of
rank $7$.

Modulo $t$, equation~\eqref{eq:h1modt2} gives $y^7=0$, while
\eqref{eq:Phi1drugi} gives
\[
  x^2=-\frac13y^3.
\]
Because $L_1/tL_1$ is commutative, the ideal generated by $X_{1}+tL_{1}$ and $Y_{1}+tL_{1}$ in $L_{1}/tL_{1}$ is
nilpotent. Hence $L_1/tL_1$ is local.

\subsection{ Conclusion.}\label{56}
 Let notation be as in Theorem \ref{New}, and put $f_{1}, f_{2}, f_{3}$, $A_{1}', A_{2}', A_{3}'$ be as above in this section.
  Put $f=f_{1}\oplus f_{2}\oplus f_{3}$,  so $f(x)=(f_{1}(x), f_{2}(x), f_{3}(x))$ and $f(y)=(f_{1}(y), f_{2}(y), f_{3}(y))$.
 
  We will first verify that the assumptions of Theorem \ref{New} hold, so there is a flat deformation from the algebra $\im (f)/t\im (f)$ to $A={\mathbb C}^{\oplus 7}\oplus M_{2}({\mathbb C})^{\oplus n-1}\oplus M_{3}({\mathbb C})$.

 We need to check the following: 

(1) $\dim _{\K}\im (f_{1})=7$,  $\dim _{\K}\im (f_{2})=4(n-1)$, $\dim_{\K}\im (f_{3})=9$. Observe that $\dim_{\K}\im (f_{3})=9$ follows from Lemma \ref{lem:f3rank2} and $\dim _{\K}\im (f_{1})=7$ follows since $h_{1}(z)$ has degree $7$. Observe also that  $\dim _{\K}\im (f_{2})=4(n-1)$ follows from Lemma \ref{lem:f2ranknew}, and since 
 $B_2=\K [z]/\langle q_{2}(z)\rangle$ because $q_{2}(z)$ has degree $n-1$.

(2) the boundedness condition $\operatorname{Im}(f)\subseteq t^{-\delta '}M\{t\}$ holds for some $\delta '$.
 This holds since  $L_1=\im (f_{1})$ is a finite $R$-module, hence is contained in $t^{-\delta ''}$ times a fixed lattice in $A_1'$ for some $\delta ''$;  $L_2=\im (f_{2})\subseteq M_2(R[u_2])$ once $c,d\in R[u_2]$ are established; $L_3=\im (f_{3})\subseteq M_3(R)$.

Then we can choose one $\delta '$ working for all three components.

 It remains to show that   $N$ (the completed Jacobi algebra) is isomorphic to $L/tL$. Observe first that these algebras have the same dimension in the case of contraction algebras. Indeed, 
\[
 \dim_{\C}N=\dim_{\C}(L/tL)=7+4(n-1)+9=4n+12.
\]
 It is known from  \cite{MW} that 
\[
 \dim_{\C}N=4n+12
\]
for the relevant type-$E$ contraction algebras, together with nilpotence of the ideal generated by $x,y$.  

 Observe also that by the construction, $  \Phi1, \Phi2  \in \ker (f)$. Therefore, 
 $L/tL$ satisfies the defining relations of the Jacobi algebra $3x^{2}+y^{3}$ and $xy^{2}+yxy+y^{2}x+g(y)$ which come from the cyclic derivatives of the superpotential.  Let $r \in x{\mathbb C}[[t]]\langle x, y\rangle+y{\mathbb C}[[t]]\langle x, y\rangle$. We will show that $f(r^{k})\in t\im (f)$, for some $k$. As shown in the previous subsections,  for $i=1,2,3$ there are $r_{i}\in {\mathbb C}[[t]]\langle x, y\rangle$ such that
 $f_{i}(r^{k}-tr_{i})=0$ for some $k$. Therefore $f((r^{k}-tr_{1})(r^{k}-tr_{2})(r^{k}-tr_{3}))=0$ for some $k$. Therefore, $f(r^{3k})\in t\im (f)$. Therefore, the ideal generated by images of $x$ and $y$ in $L/tL$ is nil. 
 Since $L/tL$ is finite-dimensional, it is nilpotent. 

 Therefore, $L/tL$ satisfies all the defining relations of $N$ (since it is the completed Jacobi algebra). Indeed, reasoning similarly as in Lemma \ref{nieuzyte} we obtain $I'=I+\mathfrak m^{4n+12}$, and 
\[
 N\cong\C\langle x,y\rangle/(I+\mathfrak m^{4n+12}).
\]
Since these two algebras have the same dimension it follows that $ N$ is isomorphic to $L/tL=\im (f)/t\im (f)$.

It follows that, for every polynomial $p(y)\in {\mathbb C}[y]$ with $\ord(p)=n$ with $n> 4$ and $n-1$ not divisible by $3$, the contraction algebra  given by the superpotential  \[x^{3}+xy^{3}+p(y),\] deforms flatly to the algebra
\[  {\mathbb C}^{\oplus 7}\oplus M_{2}({\mathbb C})^{\oplus n-1}\oplus M_{3}({\mathbb C}),\]
 provided that the completed Jacobi algebra has dimension $4n+12$ and that the ideal generated by $x,y$ is nilpotent.

\section{The case when $n-1>3$ is divisible by $3$}

 Let $\ord(p)=n$ and let $3$ divide $n-1$ in the ring of integers, so $n-1=3k$ for some natural number $k$. Since the 
present subsection treats the case $n-1>3$, we have $k>1$. Write $g$ according to congruence classes of exponents modulo $3k$:
\begin{equation}\label{eq:gdecomp}
  g(y)=g_0(y^{3k})+g_1(y^{3k})y+\cdots +g_{3k-1}(y^{3k})y^{3k-1},
\end{equation}
where $g_0,g_1, \cdots,  g_{3k-1}\in\C[z]$.
Since the order of $g(y)$ is $n-1=3k$, $g(y)\in y^{3k}{\mathbb C}[y]$. Hence, 
\[
  g_0(0)=g_1(0)=\cdots =g_{3k-1}(0)=0,
\]
and, because the coefficient of $y^{3k}$ in $g$ is $a_0\neq0$,
\[
  g_0(z)=a_0z+O(z^2).
\]

Choose once and for all
\[
  \sigma\in\C\setminus\{0,1\}
\]
and a primitive cube root of unity $ \omega\in\C$, so
\[
  \omega^3=1,\qquad \omega\neq1,
  \qquad 1+\omega+\omega^2=0.
\]
Define
\begin{align}
  s(t,y)&=\sum_{i=0}^{k-1}-g_{3i}(\sigma t^{3k})y^{3i},\\
  r(t,y)&=\sum_{i=0}^{k-1}-g_{3i+1}(\sigma t^{3k})y^{3i},\\
  \ell(t,y)&=\sum_{i=0}^{k-1}-g_{3i+2}(\sigma t^{3k})y^{3i},\\
  \gamma(t,y)&=\sum_{i=0}^{k-1}2\bigl(g_{3i+2}(t^{3k})-g_{3i+2}(\sigma t^{3k})\bigr)y^{3i},\label{eq:gamma}\\
  \alpha(t,y)&=t^{-3}\sum_{i=0}^{k-1}\bigl(g_{3i}(\sigma t^{3k})-g_{3i}(t^{3k})\bigr)y^{3i},\label{eq:alpha}\\
  \beta(t,y)&=t^{-3}\sum_{i=0}^{k-1}\bigl(g_{3i+1}(\sigma t^{3k})-g_{3i+1}(t^{3k})\bigr) y^{3i}.\label{eq:beta}
\end{align}
Then
\[
  s,r,\ell,\gamma\in t^{3k}\R[y],
  \qquad \alpha,\beta\in\R[y].
\]

We shall arrange that $f$ annihilates 
\begin{equation}\label{eq:Phi1newnew}
  \Phi_1
  =x^2-\gamma(t, y)x+\frac{1}{3}(y^3-t^3)
\end{equation}
and
\begin{equation}\label{eq:Phi2New}
\begin{split}
  \Phi_2={}&y^2x+yxy+xy^2+g(y)
  +s(t, y)+r(t, y)y+\ell(t, y)y^2
  -\frac{3}{2}\gamma(t, y)y^2.
\end{split}
\end{equation}
Modulo $t$, these are exactly the two Jacobi relations.

\subsection{The $3\times3$ component}

Put
\[
  A_3'=\Mat_3(\K).
\]
Denote \[\alpha (t)=\alpha (t,t), \beta (t)=\beta (t,t), \gamma (t)=\gamma (t,t).\]
Define
\[
  X_3=
  \begin{pmatrix}
    \gamma(t)&\alpha(t)&\beta(t)\\
    0&0&0\\
    0&0&0
  \end{pmatrix},
  \qquad
  Y_3=
  \begin{pmatrix}
    0&1&0\\
    0&0&1\\
    t^3 &0&0
  \end{pmatrix}.
\] 
Let
\[
  f_3:\R\langle x,y\rangle\longrightarrow A_3'
\]
be the unital homomorphism defined by $f_3(x)=X_3$ and $f_3(y)=Y_3$.

We have
\[
  Y_3^3=t^3I_3,
  \qquad
  X_3^2=\gamma(t)X_3.
\]
Hence $f_3(\Phi_1)=0$.

A direct multiplication gives
\begin{equation}\label{eq:f3mixed}
  Y_3^2X_3+X_3Y_3^2+Y_3X_3Y_3
  =t^3\alpha(t)I_3+t^3\beta(t)Y_3+\gamma(t)Y_3^2.
\end{equation}
Since $Y_3^3=t^3I_3$, equation~\eqref{eq:gdecomp} gives
\[
  g(Y_3)=\sum_{i=0}^{k-1}(g_{3i}(t^{3k})Y_{3}^{3i}I_3+g_{3i+1}(t^{3k})Y_3^{3i+1}+g_{3i+2}(t^{3k})Y_3^{3i+2}).
\]
 Recall that $Y_{3}^{3}=t^{3}I_{3}$.  
Using the definitions of $s,r,\ell,\alpha,\beta,\gamma$, the coefficients of
$I_3,Y_3,Y_3^2$ in $f_3(\Phi_2)$ all vanish. Thus
\[
  f_3(\Phi_2)=0.
\]
Let $L_3=\im(f_3)$.

\begin{lemma}\label{lem:f3rank3}
The $\K$-algebra generated by $X_3$ and $Y_3$ is $\Mat_3(\K)$. Moreover, the 
ideal of $L_{3}/tL_{3}$ generated by the images of $X_{3}$ and $Y_{3}$ is nilpotent. Hence $L_{3}/tL_{3}$ is
local.

\end{lemma}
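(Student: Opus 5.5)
The plan follows the proof of Lemma~\ref{lem:f3rank} and its variant Lemma~\ref{lem:f3rank2}. The matrices $X_3,Y_3$ have exactly the same shape as before, with scalars $\alpha(t)=\alpha(t,t)$, $\beta(t)=\beta(t,t)$, $\gamma(t)=\gamma(t,t)$. We take the same nine products
\[
 I_3,\ X_3,\ Y_3,\ X_3Y_3,\ Y_3X_3,\ Y_3^2,\ X_3Y_3^2,\ Y_3X_3Y_3,\ Y_3X_3Y_3^2 ,
\]
and repeat the block-triangular computation of the coordinate determinant verbatim. That computation only used $Y_3^3=t^3I_3$ and the first-row support of $X_3$. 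It gives
\[
 \det\mathcal M=t^{12}P(t)^2,\qquad
 P(t)=\gamma(t)^3+t^3\alpha(t)^3+t^6\beta(t)^3-3t^3\alpha(t)\beta(t)\gamma(t).
\]
So the first claim reduces to showing $P(t)\neq0$ in $\K$. Once this holds, the nine matrices are $\K$-independent, hence span $\Mat_3(\K)$.

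To show $P(t)\neq 0$ we find the $t$-adic leading term of each of $\alpha,\beta,\gamma$, as in the case analysis of Lemma~\ref{lem:f3rank2}.
\begin{itemize}
\item Since $g_0(z)=a_0z+O(z^2)$, the $i=0$ summand of $\alpha(t,t)$ is $t^{-3}\bigl(g_0(\sigma t^{3k})-g_0(t^{3k})\bigr)=a_0(\sigma-1)t^{3k-3}+O(t^{6k-3})$.
\item The summands with $i\ge1$ are of the form $t^{-3}\cdot O(t^{3k})\cdot t^{3i}$ and therefore have order at least $3k$.
\item Hence $\alpha(t)=a_0(\sigma-1)t^{3k-3}+O(t^{3k})$, while $\beta(t)=O(t^{3k-3})$ and $\gamma(t)=O(t^{3k})$.
\end{itemize}
Comparing orders in $P$:
\begin{itemize}
\item $t^3\alpha^3$ has exact order $9k-6$ with coefficient $[a_0(\sigma-1)]^3\neq0$;
\item $\gamma^3$ has order $\ge 9k$;
\item $t^3\alpha\beta\gamma$ has order $\ge 3+(3k-3)+(3k-3)+3k=9k-3$;
\item $t^6\beta^3$ has order $\ge 6+9k-9=9k-3$.
\end{itemize}
Thus $P(t)=[a_0(\sigma-1)]^3t^{9k-6}+O(t^{9k-3})\neq0$. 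This order bookkeeping is the step I expect to need the most care. In particular, $\beta$ may have order exactly $3k-3$, so one must check that the term $t^6\beta^3$ really lies strictly above $9k-6$; it does, by the margin of $3$.

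For nilpotency, $X_3,Y_3\in\Mat_3(\R)$, so $L_3\subseteq\Mat_3(\R)$ is a finitely generated torsion-free, hence free, $\R$-module of rank $9$. Since $\gamma,\alpha t^3,\beta t^3\in t\R$, modulo $t$ we have $Y_3^3\equiv0$ and $X_3^2\equiv0$. We also get $X_3Y_3X_3=t^3\beta X_3\in tL_3$ and $X_3Y_3^2X_3=t^3\alpha X_3\in tL_3$. Together with the reduced relation $Y_3^2X_3+Y_3X_3Y_3+X_3Y_3^2\in tL_3$, which follows from \eqref{eq:f3mixed}, these give the same relations as in Subsection~\ref{3by3}. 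The same reduction shows that every word of length $\ge 6$ in $X_3,Y_3$ lies in $tL_3$. For example, any word containing two occurrences of $X_3$ separated by at most $Y_3^2$ vanishes, and $Y_3^3\equiv 0$ bounds the runs of $Y_3$. Hence the ideal generated by the images of $X_3,Y_3$ in $L_3/tL_3$ is nilpotent, and $L_3/tL_3$ is local.
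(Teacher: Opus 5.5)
Your proposal is correct and follows essentially the same route as the paper. The paper uses the same nine products, giving $\det\mathcal M=t^{12}P(t)^2$. It uses the same order count $\alpha=a_0(\sigma-1)t^{3k-3}+O(t^{3k})$, $\beta=O(t^{3k-3})$, $\gamma=O(t^{3k})$, which isolates the leading term $[a_0(\sigma-1)]^3t^{9k-6}$ of $P$. Nilpotency likewise comes from the same length-preserving reduction modulo $tL_3$ to the nine normal words.
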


\begin{proof}
 This is done similarly to the proof of Lemma \ref{lem:f3rank} and  in the few lines below this proof. In fact the matrices used in  Lemma \ref{lem:f3rank} have the same structure, but they have different $\alpha (t), \beta (t), \gamma (t)$. The results follow with the new $\alpha (t), \beta (t), \gamma (t)$ in the same way.
 Here is a direct argument that the determinant used in the generation argument  is nonzero: 
 Since $g_0(z)=a_{0}z+O(z^{2})$, one gets $\alpha (t)=a_{0}(\sigma -1)t^{3k-3}+O(t^{3k}),$
 whereas $\beta (t)=O(t^{3k-3})$, $\gamma (t)=O(t^{3k})$. 
 Therefore, in 
\[\gamma ^{3}+t^{3}\alpha ^{3}+t^{6}\beta ^{3}-3t^{3}\alpha \beta \gamma \]
the term $t^{3}\alpha ^{3}$ has order $9k-6$, while all other terms have order at least $9k-3$. 

More precisely, its leading term is $a_0^3(\sigma-1)^3t^{9k-6}$, so the expression is nonzero. 

 To show that the ideal generated by $X_{3}+tL_{3}$ and $Y_{3}+tL_{3}$ in $L_{3}/tL_{3}$ is nilpotent observe that, by using the relations from   Lemma \ref{lem:f3rank} one has $Y_3^3=0$, $X_3^2=0$, and because $\ord (g)=3k\ge6$, also $g(Y_3)=0$. Since $\gamma (t)$ reduces to zero modulo $t$, we also have the following relations modulo $t$:
\[X_{3}Y_{3}X_{3}=X_{3}Y_{3}^2X_{3}=0,
  \qquad
  Y_{3}^2X_{3}+Y_{3}X_{3}Y_{3}+X_{3}Y_{3}^2=0.
\]
A direct reduction gives the spanning set
\[
  1,\ X_{3},\ Y_{3},\ X_{3}Y_{3},\ Y_{3}X_{3},\ Y_{3}^2,\ Y_{3}X_{3}Y_{3},\ Y_{3}^2X_{3},\ Y_{3}^2X_{3}Y_{3}
\]
for $L_3/tL_3$. Since the reductions preserve the length of each word, it follows that all words of length larger than $5$ vanish. 

\end{proof}
\subsection{The $2\times2$ component}
 Recall that $k$ is an integer such that $n-1=3k$. 
Let
\[
  q_2(z)=z^{3k}-\sigma t^{3k},
  \qquad
  B_2=\K[z]/(q_2(z)),
  \qquad
  u_2=z+(q_2(z)),
\]
and put

\[
  A_2'=\Mat_2(B_2).
\]
Thus $u_2^{3k}=\sigma t^{3k}$ where $n-1=3k$.

Set
\begin{equation}\label{eq:cdef}
  c(t, u_{2})=\frac{\gamma(t, u_{2})^2}{4}+{\frac 13} (t^{3}-{u_{2}}^{3}).
\end{equation}
Let
\[
  \Lambda=\bigoplus_{i=0}^{3k-1}\R u_2^i.
\]
Since $q_2$ is monic, every element of $\R[u_2]$ has a unique representative in $\Lambda$. Since
$\gamma(t,u_2)\in t^{3k}\Lambda$, we have
\[
  \gamma(t,u_2)^2\in t^{6k}\Lambda.
\]
Because $k>1$, we have $3<3k$, so $u_2^3$ is one of the basis monomials in $\Lambda$. Hence
\[
  \frac13(t^3-u_2^3)\notin t^{6k}\Lambda.
\]
Consequently, 
\[
  c(t, u_{2})\neq0.
\]
Define
\[
  X_2=
  \begin{pmatrix}
    {\frac {\gamma(t, u_{2})}2}&1\\
    c(t, u_{2})&{\frac {\gamma(t, u_{2})}2}
  \end{pmatrix},
  \qquad
  Y_2=
  \begin{pmatrix}
    u_2&0\\
    0&\omega u_2
  \end{pmatrix}.
\]
 Recall that $\omega \neq 1$, ${\omega }^{3}=1$.
Let
\[
  f_2:\R\langle x,y\rangle\longrightarrow A_2'
\]
be defined by $f_2(x)=X_2$, $f_2(y)=Y_2$.

Put
\[
  Q=X_2-\frac{\gamma(t, u_{2})}2I_2
  =\begin{pmatrix}0&1\\c(t, u_{2})&0\end{pmatrix}.
\]
Then
\[
  Q^2=c(t, u_{2})I_2,
  \qquad
  Y_2^3=u_{2}^{3}I_2.
\]
By the definition of $c(t, u_{2})$,
\[
  Q^2
  =\left(\frac{\gamma(t, u_{2})^2}{4}+{\frac{1}{3}}(t^3-u_{2}^{3})\right)I_2,
\]
which is equivalent to $f_2(\Phi_1)=0$.

For the second relation, the off-diagonal entries of
\[
  Y_2^2X_2+Y_2X_2Y_2+X_2Y_2^2
\]
are multiples of
\[
  1+\omega+\omega^2=0,
\]
while its diagonal part is ${\frac 32}\gamma(t, u_{2})Y_2^2$. Therefore
\[
  Y_2^2X_2+Y_2X_2Y_2+X_2Y_2^2
  -{\frac32}\gamma(t, u_{2})Y_2^2=0.
\]
Also, because $Y_2^{3k}=\sigma t^{3k}I_2$,
 
\[
  g(Y_2)
  =(\sum_{i=0}^{k-1}g_{3i}(\sigma t^{3k})u_2^{3i})I_2
   +(\sum_{i=0}^{k-1}g_{3i+1}(\sigma t^{3k})u_2^{3i})Y_2
   +(\sum_{i=0}^{k-1}g_{3i+2}(\sigma t^{3k})u_2^{3i})Y_2^2.
\]
Hence $f_2(\Phi_2)=0$.

\begin{lemma}\label{lem:f2rank2}
The $\K$-algebra generated by $X_2$ and $Y_2$ is $A_2'=\Mat_2(B_2)$.
\end{lemma}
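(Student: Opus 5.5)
We follow the proof of Lemma \ref{lem:f2rank}, adapted to the fact that $c=c(t,u_2)$ and $\gamma=\gamma(t,u_2)$ are now elements of $B_2$ rather than of $\K$. Write $\mathcal A=\K\langle X_2,Y_2\rangle\subseteq \Mat_2(B_2)$. Since $q_2(z)=z^{3k}-\sigma t^{3k}$ has $3k$ distinct roots $\zeta\,\sigma^{1/3k}t$ in $\K$, the Chinese remainder theorem gives $B_2\cong\K^{\oplus 3k}$, and hence $\Mat_2(B_2)\cong\Mat_2(\K)^{\oplus 3k}$. The element $u_2$ is a unit, because $u_2^{3k}=\sigma t^{3k}\in\K^\times$.

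\textbf{Step 1: diagonal matrices.} The algebra $\K[Y_2]$ contains $Y_2^j=\operatorname{diag}(u_2^j,\omega^ju_2^j)$. Because $\gamma(t,u_2)$ is a polynomial in $u_2^3$, the diagonal element $\operatorname{diag}(\gamma,\gamma)=\gamma(t,Y_2)$ lies in $\K[Y_2]$, since $Y_2^3=u_2^3I_2$. Hence $Q=X_2-\frac{\gamma}{2}I_2\in\mathcal A$. We then form $QY_2Q=c\,\operatorname{diag}(\omega u_2,u_2)$. Combining it with $cY_2=c\,\operatorname{diag}(u_2,\omega u_2)$ (note $c(t,u_2)I_2=Q^2\in\mathcal A$) and using $\omega\neq1$, we obtain $cu_2E_{11}$ and $cu_2E_{22}$ in $\mathcal A$. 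Multiplying these by powers of $Y_2$ gives $c\,u_2^{j}E_{ii}$ for all $j$, and therefore $c\,bE_{ii}$ for every $b\in B_2$, because the powers of $u_2$ span $B_2$ over $\K$. Consequently, once $c$ is known to be a unit of $B_2$, all diagonal matrices with entries in $B_2$ lie in $\mathcal A$.

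\textbf{Step 2: $c$ is a unit (the main obstacle).} The argument before the lemma only shows $c\neq0$, which is insufficient. We need $c(t,\lambda)\neq0$ at every root $\lambda=\zeta\sigma^{1/3k}t$ of $q_2$, which is the analogue of Lemma \ref{62}. At such a root, $\gamma(t,\lambda)\in t^{3k}\R$, so $\gamma(t,\lambda)^2/4=O(t^{6k})$. On the other hand,
\[
\tfrac13(t^3-\lambda^3)=\tfrac13t^3\bigl(1-\zeta^3\sigma^{3/3k}\bigr).
\]
This has exact order $3<6k$ provided $\zeta^3\sigma^{1/k}\neq1$. To guarantee this for all $3k$-th roots of unity $\zeta$, we need $\sigma^{1/k}$ not to be a $k$-th root of unity, i.e.\ $\sigma^{1/k}\cdot\epsilon\ne 1$ for every such $\epsilon$. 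This holds if $\sigma$ is chosen with $|\sigma|\neq1$. If the fixed $\sigma\in\C\setminus\{0,1\}$ does not already ensure this, we impose it as an additional harmless genericity condition on $\sigma$, since any such choice still gives a valid deformation. With this in place, $c$ is nonzero in every factor $\K$, hence invertible, and Step 1 yields all diagonal matrices.

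\textbf{Step 3: off-diagonal matrices.} Multiplying $Q$ by $E_{11}$ on the left and $E_{22}$ on the right gives $E_{12}\in\mathcal A$. Similarly, $E_{22}QE_{11}=cE_{21}$, and since $c^{-1}I_2$ is diagonal, $E_{21}\in\mathcal A$. Together with the diagonal matrices over $B_2$, this gives $\mathcal A=\Mat_2(B_2)$, so $\dim_\K\mathcal A=4\cdot 3k=4(n-1)$, as required for Theorem \ref{New}.
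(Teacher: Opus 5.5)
Your argument is correct and is essentially the paper's proof: you use the same $Q=X_2-\tfrac12\gamma(t,Y_2)$, the same pair $QY_2Q$ and $Q^2Y_2=cY_2$ to isolate $c\,u_2E_{11}$ and $c\,u_2E_{22}$, and the same key fact that $c(t,u_2)$ is nonzero at every root of $q_2$. The one minor difference is at the end: you invert $c$ to obtain matrix units, whereas the paper counts $12k$ independent elements using injectivity of multiplication by $c$ and $c^2$. One correction: the extra condition $|\sigma|\neq1$ in your Step 2 is unnecessary, because if $(\zeta\sigma^{1/3k})^3=1$ for some $3k$-th root of unity $\zeta$, then raising to the $k$-th power gives $\sigma=(\zeta\sigma^{1/3k})^{3k}=1$; so the standing assumption $\sigma\neq1$ already suffices, exactly as in the paper.
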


\begin{proof} 
{\em Part $1$.} We first show that $c(t, u_{2})$ is not a zero divisor in $B_2$.
Suppose to the contrary that $c(t, u_{2})$ is a zero divisor in $B_2$.
Since
\[
 q_2(z)=z^{3k}-\sigma t^{3k}
\]
has distinct roots over $\K$, there is a root $u'=t\lambda$, where $\lambda\in\C$ and $\lambda^{3k}=\sigma$, such that
\[
 c'=\frac{\gamma(t,u')^2}{4}+\frac13(t^3-{u'}^3)=0.
\]
Thus
\[
 c'=\frac{(1-\lambda^3)t^3}{3}+\frac{\gamma(t,t\lambda)^2}{4}.
\]
Observe that $\gamma(t,t\lambda)\in t^{3k}\R$, hence
$\gamma(t,t\lambda)^2\in t^{6k}\R$. Moreover, $\lambda^3\neq1$, since otherwise
\[
 \sigma=\lambda^{3k}=(\lambda^3)^k=1,
\]
contrary to the choice $\sigma\neq1$. Therefore $c'\neq0$, a contradiction. Hence $c(t, u_{2})$ is not a zero divisor in $B_2$.

{\em Part $2$.} Let $\mathcal A$ denote the $\K$-algebra generated by $X_2$ and $Y_2$.
Since $\gamma(t,y)$ contains only powers $y^{3i}$ and $Y_2^{3i}=u_2^{3i}I_2$, we have
\[
 \gamma(t,Y_2)=\gamma(t,u_2)I_2.
\]
Hence
\[
 Q=X_2-\frac12\gamma(t,Y_2)\in\mathcal A.
\]
Set
\[
  Z=QY_2Q
   =c(t, u_{2})\begin{pmatrix}\omega u_2&0\\0&u_2\end{pmatrix}.
\]
Also,
\[
 Q^2Y_2=c(t, u_{2})\begin{pmatrix}u_2&0\\0&\omega u_2\end{pmatrix}.
\]
These two diagonal matrices have linearly independent diagonal patterns because $\omega\neq1$. Hence their $\K$-linear combinations give
\[
 c(t, u_{2})\begin{pmatrix}u_2&0\\0&0\end{pmatrix},
 \qquad
 c(t, u_{2})\begin{pmatrix}0&0\\0&u_2\end{pmatrix}.
\]
By multiplying these matrices by powers of $Y_2$ and using
\[
 u_2^{3k}=\sigma t^{3k}\in\K^\times,
\]
we obtain, for $0\leq i\leq3k-1$,
\[
 c(t, u_{2})u_2^iE_{11},\qquad c(t, u_{2})u_2^iE_{22}\in\mathcal A.
\]
Right multiplication by $Q$ gives
\[
 c(t, u_{2})u_2^iE_{12},\qquad c(t, u_{2})^2u_2^iE_{21}\in\mathcal A.
\]
Since $1,u_2,\ldots,u_2^{3k-1}$ is a $\K$-basis of $B_2$, and multiplication by $c(t, u_{2})$ and $c(t, u_{2})^2$ is injective, these $12k$ matrices are $\K$-linearly independent. 

Since
\[
 \dim_{\K}\Mat_2(B_2)=12k,
\]
we conclude that $\mathcal A=\Mat_2(B_2)$.
\end{proof}

Let $L_2=\im(f_2)$. Modulo $tL_2$ the relations become 
\begin{equation}\label{eq:L2specialnew}
  3x^2+y^3=0,
  \qquad
  y^{n-1}=0,
  \qquad
  y^2x+yxy+xy^2=0.
\end{equation}

 Notice that $L_{2}/tL_{2}$ also satisfies these relations, so it is a homomorphic image of the algebra given by relations~\eqref{eq:L2specialnew}, call this algebra $C$. 
 We will now show that the algebra $C$  is local and its
ideal generated by $x$ and $y$ is nilpotent. The same holds for $L_{2}/tL_{2}$ since it is a homomorphic image of the algebra $C$.
 For
\[
 C=\C\langle x,y\rangle/(3x^2+y^3,\ y^{3k},\ y^2x+yxy+xy^2),
\]
use the reductions
\[
 x^2\to-\tfrac13y^3,
 \qquad
 xy^2\to-y^2x-yxy,
 \qquad
 xyxy\to yxyx,
 \qquad
 yxyxy\to y^2xyx,
 \qquad
 y^{3k}\to0.
\]
After resolving the standard overlaps, the normal monomials are of the form
\[
 y^a,\quad y^ax,\quad y^axy,\quad y^axyx,
 \qquad 0\le a<3k.
\]
All these reductions are non-decreasing in ordinary word length, while the normal words have bounded length.  Therefore every sufficiently long word is zero, proving that $(x,y)$ is nilpotent and $C$ is local.
\subsection{The commutative degree-seven component}
We will use a very similar proof to the previous one when calculating the commutative component. 
 We repeat the whole argument below in our new setting.
Define
\begin{equation}\label{eq:Ddef3}
  D(y)=\sum_{i=0}^{k-1}g_{3i}(\sigma t^{3k})y^{3i}+g_{3i+1}(\sigma t^{3k})y^{3i+1}+g_{3i+2}(\sigma t^{3k})y^{3i+2}-g(y).
\end{equation}
The second relation in the commutative component will give
\[
  x-\frac{\gamma(t, y)}2=\frac{D(y)}{3y^2}.
\]
Combining this with \eqref{eq:Phi1newnew} leads to the polynomial
\begin{equation}\label{eq:Hdef3}
  H(y)
  =D(y)^2
   -9y^4\left(
      \frac{\gamma(t, y)^2}{4}+\frac{t^3-y^3}{3}
    \right)
  \in\R[y].
\end{equation}

Modulo $t$ we have
\[
  D(y)\equiv-g(y)\pmod t,
  \qquad
  \gamma(t,y)\equiv0\pmod t,
\]
so
\begin{equation}\label{eq:Hmodt3}
  H(y)\equiv g(y)^2+3y^7
  =y^7\bigl(3+O(y)\bigr)
  \pmod t.
\end{equation}
The factor multiplying $y^7$ has nonzero constant term (which is $3$).
After multiplying $H$ by a nonzero constant if necessary, we may assume that $H$ is monic
 (since $D(y)^2$ supplies the leading term of $H$, whose leading coefficient is $e_m^2\in\C^{\times}$; indeed
 even though $\gamma(t, y)$ depends on $y$, $D(y)$ has degree $m\ge3k$ with leading coefficient $-e_m\in\C^{\times}$, whereas $\deg\bigl(y^4\gamma(t,y)^2\bigr)\le 6k-2<2m $
and the remaining $y^4(t^3-y^3)$ term has degree at most $7$).
By Hensel's lemma, there is a monic 
factor
\[
  h_1(y)\in\R[y],\qquad \deg h_1=7,
\]
such that
\begin{equation}\label{eq:h1modt3}
  h_1(y)\equiv y^7\pmod t
\end{equation}
and $h_1(y)$ divides $H(y)$ in $\R[y]$.
Moreover, if $H=h_1h_2$ is the corresponding Hensel factorization, then the complementary factor $h_2$ has a unit constant term by \eqref{eq:Hmodt3}. Since
\[
  H(0)=g_0(\sigma t^{3k})^2\neq0,
\]
we obtain $h_1(0)=H(0)/h_2(0)\neq0$.

Set
\[
  B_1=\K[z]/(h_1(z)),
  \qquad
  u_1=z+(h_1(z)),
  \qquad
  A_1'=B_1.
\]
Since $h_1(0)\neq0$, $u_1$ is invertible in $B_1$. Define
\[
  Y_1=u_1,
\]
and
\begin{equation}\label{eq:X1def3}
  X_1
  =\frac{\gamma(t, u_{1})}2
   +\frac{1}{3}u_1^{-2}D(u_1).
\end{equation}
Let
\[
  f_1:\R\langle x,y\rangle\longrightarrow A_1'
\]
be the homomorphism given by $f_1(x)=X_1$, $f_1(y)=Y_1$.

Since $h_1$ divides $H$, equation~\eqref{eq:Hdef3} gives in $B_1$
\[
  \left(\frac{D(u_1)}{3u_1^2}\right)^2
  =\frac{\gamma(t,u_1)^2}{4}+\frac{t^3-u_1^3}{3}.
\]
Using \eqref{eq:X1def3}, this is exactly
\[
  X_1^2-\gamma(t,u_1)X_1+\frac{u_1^3-t^3}{3}=0,
\]
so $f_1(\Phi_1)=0$.

Since $B_1$ is commutative,
\[
  Y_1^2X_1+Y_1X_1Y_1+X_1Y_1^2=3Y_1^2X_1.
\]
By \eqref{eq:X1def3},
\[
  3Y_1^2X_1-\frac32\gamma(t,Y_1)Y_1^2=D(Y_1).
\]
Using \eqref{eq:Ddef3}, we obtain $f_1(\Phi_2)=0$.

Let $L_1=\im(f_1)$. Since $Y_1=u_1$, the generic fibre of $L_1$ is all of
$B_1$, so $L_1$ has rank $7$ over $\R$. It is finite over $\R$: the element
$Y_1$ is integral by $h_1(Y_1)=0$, and $X_1$ is integral over $\R[Y_1]$ by
\eqref{eq:Phi1newnew}. Thus $L_1$ is a finite torsion-free, hence free, $\R$-module of
rank $7$.

Modulo $t$, equation~\eqref{eq:h1modt3} gives $y^7=0$, while
\eqref{eq:Phi1newnew} gives
\[
  x^2=-\frac13y^3.
\]
Because $L_1/tL_1$ is commutative, the ideal generated by $x$ and $y$ is
nilpotent. Hence $L_1/tL_1$ is local.

{\bf The conclusion.} The arguments corresponding to  {\em The full homomorphism and its rank} and {\em Identification of the special fibre} are analogous to those in the previous sections.

 By the previous subsections: (a) $\dim_ \K A'_1=7$, $\dim_ \K A'_2=4(n-1)$, $\dim_ \K A'_3=9$; (b) the three matrix sizes are $1,2,3$, so Theorem \ref{New} applies; (c) boundedness is proved as in Subsection \ref{56}; and (d) the nilpotence argument from Subsection \ref{56} applies as well.  Then, we check that the dimensions agree, $7+4(n-1)+9=4n+12$.

It follows that, for every polynomial $p(y)\in\mathbb C[y]$ with $\ord(p)=n>4$ and $3\mid(n-1)$, the contraction algebra  given by the superpotential  \[x^{3}+xy^{3}+p(y),\] deforms flatly to the algebra
\[  {\mathbb C}^{\oplus 7}\oplus M_{2}({\mathbb C})^{\oplus n-1}\oplus M_{3}({\mathbb C}).\]

\section{The main general statement}
  We are now ready to state our main result, which follows from the conclusions of the previous sections. 
\medskip

\begin{theorem}
Let $p(y)\in\C[y]$ satisfy $\ord (p)=n\ge4$, and let $N$ be the completed Jacobi algebra associated with
\[
 x^3+xy^3+p(y).
\]
Assume that $N$ is a contraction algebra of the type under consideration; in particular, assume that its maximal ideal generated by $x,y$ is nilpotent and that
\[
 \dim_{\C}N=\begin{cases}
 27, & n=4,\\
 4n+12, & n>4.
 \end{cases}
\]
Then
\[
 N \quad\text{admits a flat deformation to}\quad
 \begin{cases}
 \C^{\oplus6}\oplus M_2(\C)^{\oplus3}\oplus M_3(\C), & n=4,\\[2mm]
 \C^{\oplus7}\oplus M_2(\C)^{\oplus(n-1)}\oplus M_3(\C), & n>4.
 \end{cases}
\]
\medskip
\end{theorem}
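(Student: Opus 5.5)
The plan is to assemble the main theorem from the three case analyses already carried out, splitting by $n$: first $n=4$, then $n>4$ with $3\nmid n-1$, then $n>4$ with $3\mid n-1$. In each case the construction is the same. We take the homomorphism $f=f_1\oplus f_2\oplus f_3:\R\langle x,y\rangle\to A_1'\oplus A_2'\oplus A_3'$ built in the corresponding section, with $A_1'$ commutative, $A_2'=\Mat_2(B_2)$ and $A_3'=\Mat_3(\K)$. By construction, $f$ annihilates deformed Jacobi relations $\Phi_1,\Phi_2$ that reduce modulo $t$ to $3x^2+y^3$ and $y^2x+yxy+xy^2+g(y)$.

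First I will verify the hypotheses of Theorem~\ref{New}. The block sizes $1,2,3$ are pairwise distinct. For the generic ranks $\dim_\K\K\im(f_i)=\dim_\K A_i'$, I will cite the following:
\begin{itemize}
\item for $f_3$: Lemma~\ref{lem:f3rank}, Lemma~\ref{lem:f3rank2} and Lemma~\ref{lem:f3rank3}, via the nonvanishing of the leading $t$-adic term of $\det\mathcal M$;
\item for $f_2$: Lemma~\ref{lem:f2rank}, Lemma~\ref{lem:f2ranknew} and Lemma~\ref{lem:f2rank2}, which use Lemma~\ref{62} or its analogue that $c$ is a non-zero-divisor;
\item for $f_1$: the fact that $B_1$ is generated by $Y_1=u_1$, with $\deg h_1=6$ or $7$ according as $n=4$ or $n>4$.
\end{itemize}
For the boundedness condition, $L_2\subseteq\Mat_2(\R[u_2])$ and $L_3\subseteq\Mat_3(\R)$, while $L_1$ is a finite $\R$-module. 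Theorem~\ref{New} (via Theorem~\ref{thm:nonsquarefree}) then gives a flat deformation from $L/tL$, where $L=\im(f)$, to $\C^{\oplus d_1}\oplus\Mat_2(\C)^{\oplus d_2}\oplus\Mat_3(\C)$. Here $(d_1,d_2)=(6,3)$ for $n=4$ and $(d_1,d_2)=(7,n-1)$ for $n>4$. Moreover, $L$ is free of rank $27$, respectively $7+4(n-1)+9=4n+12$.

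Next I identify $L/tL$ with $N$. Since $f(\Phi_1)=f(\Phi_2)=0$, there is a surjection $\C\langle x,y\rangle/I\twoheadrightarrow L/tL$. To pass to the completion I need the ideal generated by $x,y$ in $L/tL$ to be nilpotent. Each $L_i/tL_i$ is local with nilpotent augmentation ideal, as shown componentwise. The conductor argument of Subsection~\ref{36} transfers this to $L$: there is $c$ with $t^c(L_1\oplus L_2\oplus L_3)\subseteq L$, so sufficiently long words lie in $tL$. On the other side, the assumed nilpotence of the maximal ideal of $N$, together with the argument of Lemma~\ref{nieuzyte}, gives $I'=I+\mathfrak m^{D+2}$ with $D=\dim_\C N$. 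Hence the surjection factors through $N$. The hypothesis on $\dim_\C N$ matches $\operatorname{rank}_\R L=\dim_\C L/tL$, so $N\twoheadrightarrow L/tL$ is an isomorphism. Transporting the flat deformation of $L/tL$ then gives the claim in all three cases.

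The main obstacle is not the assembly but making sure the generic-rank inputs hold uniformly for all $p$ with the given order. The most delicate of these is the non-zero-divisor property of $c$ in $B_2$ for $n>4$, $3\nmid n-1$. That argument relies on the valuation comparison in Lemma~\ref{62}: it needs $(n-1)i_0=(3m+3)e$ and $n-1\ge4$, so I would recheck that the lowest-order term $9u'^4t^3$ is uncancelled. A second delicate point is the monicity normalisation of $H$ before applying Hensel's lemma, which requires the leading coefficient of $H$ to be a nonzero constant, as argued from $\deg D=m$. If any of these checks failed, I would fall back on a direct $\K$-dimension count of words in $X_2,Y_2$ after a Puiseux extension splitting $q_2$.
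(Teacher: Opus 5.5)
Your proposal is correct and follows essentially the same route as the paper. You assemble $f=f_1\oplus f_2\oplus f_3$ from the three case constructions, verify the hypotheses of Theorem~\ref{New} (hence Theorem~\ref{thm:nonsquarefree}) from the component rank lemmas and boundedness, and identify $L/tL$ with $N$ via the surjection from $\C\langle x,y\rangle/I$, nilpotence of the augmentation ideal, and the dimension count. The only minor deviation is that for $n>4$ the paper (Subsection~\ref{56}) shows the augmentation ideal of $L/tL$ is nil using the element $(r^k-tr_1)(r^k-tr_2)(r^k-tr_3)\in\ker f$, whereas you reuse the conductor argument of Subsection~\ref{36} in all three cases; both arguments work.
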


 Methods analogous to those used in our paper might also be relevant in other situations where deformations of noncommutative algebras appear.  Interesting applications of deformation theory in noncommutative algebra, representation theory of algebras and Hopf algebras  were considered for example  in \cite{DSV, Sri, IG, IG2, HV, Shepler, SW}. Deformations of concrete algebras given generators and relations appear also in mathematical physics, for example in  \cite{jose1, jose2}. 

$ $

{\bf Acknowledgements.} The paper was checked by ChatGPT, and ChatGPT simplified some proofs. 
 We thank Michael Wemyss for suggesting open questions.

\end{document}